\numberwithin{equation}{section}  
\newcommand{\beq}{\begin{equation}} 
\newcommand{\eeq}{\end{equation}} 
\newcommand{\bea}{\begin{aligned}}
\newcommand{\eea}{\end{aligned}}
\newcommand{\bdm}{\begin{displaymath}}
\newcommand{\edm}{\end{displaymath}}
\newcommand{\barr}{\begin{array}}
\newcommand{\earr}{\end{array}}
\newcommand{\ben}{\begin{enumerate}}
\newcommand{\een}{\end{enumerate}}
\newcommand{\bde}{\begin{description}}
\newcommand{\ede}{\end{description}}
\newtheorem{teor}{Theorem}
\newtheorem{prop}[teor]{Proposition}  
\newtheorem{lem}[teor]{Lemma}
\newcommand{\R}{\mathbb{R}}
\newcommand{\N}{\mathbb{N}}
\newcommand{\PP}{\mathbb{P}}
\newcommand{\E}{{\mathbb{E}}}
\newcommand{\defi}{\equiv} 
\newcommand{\ee}{\text{e}}
\newcommand{\w}{\omega}
\newcommand{\vare}{\varepsilon}
\newcommand{\F}{\mathcal{F}}
\newcommand{\1}{\mathbbm{1}}
\begin{document}
\title[Ergodicity of branching Brownian motion]
{An ergodic theorem for the frontier \\ of branching Brownian motion}

\author[L.-P. Arguin]{Louis-Pierre  Arguin}            
 \address{L.-P. Arguin\\ Universit\'{e} de Montr\'{e}al\\ 2920 chemin de la Tour\\
Montr\'{e}al, QC H3T 1J4 \\ 
Canada}
\email{arguinlp@dms.umontreal.ca}
\author[A. Bovier]{Anton Bovier}
\address{A. Bovier\\Institut f\"ur Angewandte Mathematik\\Rheinische
   Friedrich-Wilhelms-Uni\-ver\-si\-t\"at Bonn\\Endenicher Allee 60\\ 53115
   Bonn,Germany}
\email{bovier@uni-bonn.de}

\author[N. Kistler]{Nicola Kistler}
\address{N. Kistler\\Institut f\"ur Angewandte Mathematik\\Rheinische
   Friedrich-Wilhelms-Uni\-ver\-si\-t\"at Bonn\\Endenicher Allee 60\\ 53115
   Bonn,
Germany}
\email{nkistler@uni-bonn.de}

\subjclass[2000]{60J80, 60G70, 82B44} \keywords{Branching Brownian motion, ergodicity,
extreme value theory, KPP equation and traveling waves}

\thanks{A. Bovier is partially supported through the German Research Council in the SFB 611 and
the Hausdorff Center for Mathematics. N. Kistler is partially supported by the Hausdorff Center for Mathematics.}

 \date{\today}

\begin{abstract} 
We prove a conjecture of Lalley and Sellke [{\it Ann. Probab.} {\bf 15} (1987)] asserting that the empirical (time-averaged) distribution function of the maximum of branching Brownian motion 
converges almost surely to a double exponential, or Gumbel, distribution with a random shift.
The method of proof is based on the decorrelation of the maximal displacements for appropriate time scales.
A crucial input is the localization of the paths of particles close to the maximum that was previously established by the authors [{\it Comm. Pure Appl. Math.} {\bf 64} (2011)].
\end{abstract}

\maketitle


\section{Introduction} \label{introduction}
Branching Brownian Motion (BBM) is a continuous-time Markov branching process
 which plays an important role in the theory of partial differential 
equations \cite{aronson_weinberger,aronson_weinberger_two, mckean}, 
in particle physics \cite{munier_peschanski}, 
in the theory of disordered systems
\cite{ BovierKurkova_II, derrida_spohn}, and in mathematical biology \cite{fisher, kessler_et_al}. It is constructed as follows.  
Consider a standard Brownian motion $x(t)$, starting at $0$ at time $0$.
We consider $x(t)$ to be the position of a {\it particle} at time $t$.
After an exponential random time $T$ of mean one and independent of $x$, the particle splits into $k$ particles
with probability $p_k$, where $\sum_{k=1}^\infty p_k = 1$, $\sum_{k=1}^\infty k p_k = 2$, and $\sum_{k} k(k-1) p_k < \infty$. 
The positions of the $k$ particles are independent Brownian motions starting at $x(T)$.
Each of these processes have the same law as the first Brownian particle. 
Thus, after a time $t>0$, there will be $n(t)$ particles  located at $x_1(t), \dots, x_{n(t)}(t)$, with $n(t)$ being the random number of offspring generated up to that time (note that $\E n(t)=e^t$). 

An interesting link between BBM and partial differential equations 
was observed by McKean \cite{mckean}. If one denotes by 
\beq \label{bbm_repr}
u(t, x) \defi \PP\left[ \max_{1\leq k \leq n(t)} x_k(t) \leq x \right]
\eeq
the law of the maximal displacement, a renewal argument shows that 
$u(t,x)$ solves
the Kolmogorov-Petrovsky-Piscounov equation [KPP],  also referred to as the Fisher-KPP equation,
\beq \bea \label{kpp_equation}
& u_t = \frac{1}{2} u_{xx} + \sum_{k=1}^\infty p_k u^k -u, \\
& u(0, x)= 
\begin{cases}
1, \; \text{if}\;  x\geq 0,\\
0, \, \text{if}\; x < 0. 
\end{cases}
\eea \eeq 
This equation has raised a lot of interest, in part because it admits traveling wave solutions:
 there exists a unique solution satisfying 
\beq \label{travelling_one}
u\big(t, m(t)+ x \big) \to \omega(x) \qquad \text{uniformly in}\;  x\; \text{as} \; t\to \infty,
\eeq
with the centering term, the {\it front} of the wave, given by
\beq \label{centering_kpp}
m(t) = \sqrt{2} t - \frac{3}{2\sqrt{2}} \ln t, 
\eeq
and $\w(x)$ is the unique solution (up to translation) of the o.d.e. 
\beq \label{wave_pde}
\frac{1}{2} \omega_{xx} + \sqrt{2} \omega_x + \omega^2 - \omega = 0.
\eeq
The leading order of the front has been established by Kolmogorov, Petrovsky, and Piscounov \cite{kpp}. The logarithmic corrections have been obtained by Bramson \cite{bramson}, using the probabilistic representation given above. 

Equations \eqref{bbm_repr} and \eqref{travelling_one} show the weak convergence of the distribution of the recentered maximum of BBM.

Let
\beq
M(t) \defi \max_{k\leq n(t)} x_k(t)-m(t)\ ,
\eeq
and define for $k= 1 \dots n(t)$,
\beq \label{defi_y}
y_k(t) \defi \sqrt{2}t-x_k(t) \qquad z_k(t) \defi y_k(t)e^{-\sqrt{2}y_k(t)}.
\eeq
With this notation, we consider the quantities 
\beq \label{defi_martingale}
Y(t) \defi \sum_{k\leq n(t)} e^{-\sqrt{2} y_k(t)} \qquad Z(t) \defi \sum_{k\leq n(t)} z_k(t) \ .
\eeq
In 1987, Lalley and Sellke \cite{lalley_sellke} proved that
\beq \label{y_to_zero}
\bea
\lim_{t\uparrow \infty} Y(t) &= 0 \text{ a.s. } \qquad \text{and} \qquad \lim_{t\uparrow \infty} Z(t) &= Z \text{ a.s.},
\eea
\eeq
where $Z$ is a strictly positive random variable with infinite mean. 

This paper is concerned with the large time limit of the empirical (time-averaged) distribution of the maximal displacement 
\beq \label{random_dist}
F_T(x) \defi \frac{1}{T} \int_0^T \1\{M(s) \leq x \} ~ ds, ~ x\in\R
\eeq
The main result is that $F_T$ converges almost surely as $T\to\infty$ to a random distribution function. 
The limit is the double exponential (Gumbel) distribution that is shifted by the random variable $Z$: 

\begin{teor}[Ergodic Theorem]\label{conjecture_compact}
For any $x\in\R$,
\beq \bea \label{goal_two}
\lim_{T\uparrow \infty} F_T(x) = \exp\left( -C Z e^{-\sqrt{2} x}\right) \  \text{ almost surely,}
\eea \eeq
where $C>0$ is a positive constant.
\end{teor}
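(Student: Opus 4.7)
The plan is to upgrade the weak convergence of $M(t)$ (Bramson, Lalley--Sellke) to almost-sure convergence of its Cesàro average via an $L^2$-ergodic argument: identify the limit through a conditional expectation, then kill the fluctuations through a variance estimate based on the decorrelation of the maxima at well-separated times.

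\textbf{Step 1: identification of the limit.} Fix $r\ge 0$ and condition on $\F_r$. A branching-Markov decomposition at time $r$ expresses $M(t)$ as the maximum, over the $n(r)$ subtrees rooted at the particles alive at time $r$, of recentered maxima with shifted initial position; combining this with Bramson's tightness estimates and the almost-sure convergences \eqref{y_to_zero} applied to each subtree yields the conditional form of the Lalley--Sellke statement,
\beq
\lim_{t\to\infty} \PP\bigl[M(t)\le x \bigm| \F_r\bigr] \;=\; G_r(x)\defi\E\bigl[\exp(-CZe^{-\sqrt 2 x}) \bigm| \F_r\bigr] \quad \text{a.s.}
\eeq
Bounded convergence then gives $\E[F_T(x)\mid \F_r]\to G_r(x)$ as $T\to\infty$. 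Since $Z$ is $\F_\infty$-measurable, the martingale $r\mapsto G_r(x)$ converges a.s.\ to $G(x)\defi\exp(-CZe^{-\sqrt 2 x})$, identifying the candidate limit in \eqref{goal_two}.

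\textbf{Step 2: decorrelation estimate.} The remaining task is to show $F_T(x)-\E[F_T(x)\mid \F_r]\to 0$ a.s. The central quantity is
\beq
\mathrm{Var}\bigl(F_T(x)\bigm| \F_r\bigr) \;=\; \frac{2}{T^2}\int_r^T\!\int_r^t \mathrm{Cov}\bigl(\1\{M(s)\le x\},\,\1\{M(t)\le x\} \bigm| \F_r\bigr)\,ds\,dt,
\eeq
and the heart of the proof is to bound the conditional covariance when $t-s$ is large. Heuristically, for $t\gg s$ the ancestor at time $s$ of any particle close to $m(t)$ must lie well below the front $m(s)$, so the subtrees realizing $M(s)$ and $M(t)$ are essentially disjoint and produce independent maxima. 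Making this precise is where the path-localization result of our earlier paper (the ``crucial input'' quoted in the abstract) enters: it confines the ancestral trajectories of extremal particles to a narrow entropic envelope, enabling a coupling of $M(s)$ and $M(t)$ with the maxima of two independent BBMs up to a negligible error. Cesàro-averaging the resulting pointwise covariance decay yields $\mathrm{Var}(F_T(x)\mid \F_r)\to 0$, with a quantitative polynomial rate that allows Chebyshev and Borel--Cantelli along the subsequence $T_n=n^2$ to deliver a.s.\ convergence $F_{T_n}(x)-G_{r}(x)\to 0$ for fixed rational $x$ and integer $r$.

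\textbf{Step 3: passing to all $T$, all $x$, and all $r$.} Since $s\mapsto\int_0^s \1\{M(u)\le x\}\,du$ is non-decreasing and bounded by $s$, one has the deterministic bound $|F_T(x)-F_{T_n}(x)|\le 2(T_{n+1}-T_n)/T_n=O(1/n)$ for $T\in[T_n,T_{n+1}]$, so a.s.\ convergence transfers from the subsequence to all $T$. Intersecting the full-measure events over $r\in\N$, $x\in\Q$ and letting $r\to\infty$ (using the a.s.\ martingale convergence $G_r\to G$ from Step 1) gives $F_T(x)\to G(x)$ a.s.\ for every rational $x$, which extends to all $x\in\R$ by monotonicity of $F_T$ and continuity of $G$.

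\textbf{Main obstacle.} The technical core is Step 2. Since $M$ is not a stationary process and the BBM tree creates long-range genealogical correlations, no off-the-shelf ergodic theorem applies. The whole decorrelation scheme hinges on the fine control of ancestral trajectories developed in the authors' earlier work, which is precisely what forces two temporally distant maxima to be realized by essentially disjoint subtrees and thus to be asymptotically independent conditionally on the early history.
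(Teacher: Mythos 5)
Your Step 2 is the flaw, and it is not a technical gap but a wrong claim: for a \emph{fixed} $r$, the conditional variance $\mathrm{Var}(F_T(x)\mid\F_r)$ does \emph{not} tend to zero. Indeed, the theorem itself says $F_T(x)\to\exp(-CZe^{-\sqrt2x})$ a.s., and $Z$ is not $\F_r$-measurable; hence, by bounded convergence, $\mathrm{Var}(F_T(x)\mid\F_r)\to \mathrm{Var}\bigl(\exp(-CZe^{-\sqrt2x})\bigm|\F_r\bigr)>0$ a.s. Equivalently, your claimed conclusion $F_{T_n}(x)-G_r(x)\to0$ a.s.\ for every fixed integer $r$ is self-contradictory, since $G_r(x)\neq G_{r'}(x)$ in general. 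The heuristic behind your covariance bound ("for $t\gg s$ the subtrees realizing $M(s)$ and $M(t)$ are essentially disjoint, hence independent") is precisely what fails for BBM: by Lalley--Sellke, every particle eventually has a descendant in the lead, so both $M(s)$ and $M(t)$ feel the derivative martingale limit $Z$, which encodes early history \emph{beyond} $\F_r$; conditionally on a fixed $\F_r$, the covariance of $\1\{M(s)\le x\}$ and $\1\{M(t)\le x\}$ converges, as $s\to\infty$ and $t/s\to\infty$, to $\mathrm{Var}(\exp(-CZe^{-\sqrt2x})\mid\F_r)$, not to $0$. Decorrelation only holds in a much more restricted regime: both times of order $T$ (hence the cutoff $\vare T$), separated by $T^\xi$, and after subtracting the conditional expectation given $\F_{R_T}$ with $R_T\to\infty$ as $T\to\infty$ (but $R_T=o(\sqrt T)$), so that the common dependence on $Z$ is absorbed into the conditioning; the residual correlation is then due to common ancestry \emph{after} time $R_T$, which is what the path-localization estimates control (the common ancestor of two extremal particles at well-separated times must branch before $I_T-r_T$, an event of small probability within the tubes).

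Two further consequences of the correct setup that your outline misses: first, since the conditioning field $\F_{R_T}$ moves with $T$, one needs an $\F_{R_T}$-conditioned version of the Lalley--Sellke convergence (the paper's Theorem~\ref{convergence_to_der_mart}, which yields the limit $\exp(-CZe^{-\sqrt2x})$ directly, with no final $r\to\infty$ step); second, the achievable covariance bound is only of order $(\ln T)^{\epsilon_1}e^{-(\ln T)^{\epsilon_2}}$, which is not summable along a polynomial subsequence, so Chebyshev plus Borel--Cantelli along $T_n=n^2$ does not close the argument; one needs the weighted summability criterion of Lyons' SLLN, $\sum_T T^{-1}\E\bigl[\bigl|\frac1T\int X_s\,ds\bigr|^2\bigr]<\infty$, which the stretched-logarithmic bound does satisfy. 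Your Steps 1 and 3 (identification of the limit, interpolation between integer times, monotonicity in $x$) are fine, but the core of the proof has to be restructured around the $T$-dependent conditioning and the localized-in-time decorrelation.
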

The derivative martingale $Z$ encodes the dependence on the early evolution of the system. The mechanism 
for this is subtle, and we shall provide first some intuition in the next section. \\

The limit \eqref{goal_two} was first conjectured by Lalley and Sellke in \cite{lalley_sellke}.
They showed that, despite the weak convergence \eqref{travelling_one}, 
the empirical distribution $F_T(x)$ {\it cannot} converge to $\omega(x)$ in the limit of large times (for any $x\in \R$), and proved that the latter is recovered when $Z$ is integrated, i.e.
\beq \label{gumbel_like}
\omega(x) = \E\left[ \exp\left(- C Z \ee^{-\sqrt{2}x} \right)\right].
\eeq
The issue of ergodicity of BBM has also been discussed by Brunet and Derrida in \cite{brunet_derrida_two}.

Ergodic results similar to Theorem \ref{conjecture_compact} can be proved for statistics of extremal particles of BBM other than the distribution of the maximum.
(This will be detailed in a separate work).
Throughout the paper, we use the term {\it extremal} to denote particles at distance of order one from the maximum. 
We also refer to the level of the maximum of the positions as the {\it edge}, or {\it frontier}.

A description of the law of the statistics of extremal particles has been obtained 
in a series of papers of the authors \cite{abk_genealogies, abk_poissonian, abk_extremal} and in the work of A\"{\i}d\'{e}kon, Beresticky, Brunet, and Shi \cite{aidekon_et_al}. 
It is now known that the joint distribution of extremal particles recentered by $m(t)$
converges weakly to a randomly shifted Poisson cluster process. The positions of the clusters is a random shift of a Poisson point process with exponential 
density. The law of the individual clusters is characterized in terms of a branching Brownian motion conditioned to perform unusually large displacements. 
A description of such conditioned BBMs has been given by Chauvin and Rouault \cite{chauvin_rouault}. 

We point out that the interest in the properties of BBM stems also from its alleged universality: it is conjectured, and in some instances also proved,
that different models of probability and of statistical mechanics share many structural features with the extreme values of BBM.  
A partial list includes the two-dimensional Gaussian free field \cite{bdg, bdz, bramson_zeitouni}, the  cover times of graphs by random walks \cite{dembo, dprz},
and in general, log-correlated Gaussian fields, see e.g. \cite{carpentier_ledoussal, bouchaud_fyodorov}.\\

\section{Outline of the proof} \label{conjecture}




It will be convenient to work with compact intervals $\mathcal{D}=[d,D]$ with $-\infty<d<D<\infty$ for the localization procedure 
introduced in Section \ref{intricate}.
Convergence of the empirical distribution on these sets imply convergence of the distribution function $F_T(x)$.
The proof of Theorem \ref{conjecture_compact} goes as follows. 
First, we introduce a "cutoff" $\vare>0$ and split the integration over the sets $[0, T\vare]$ and $(T \vare, T]$. Precisely: with the above notations, we write
\beq
F_T( D ) -F_T(d) = \frac{1}{T} \int_{\vare T}^T \1\{M(s) \in \mathcal D \} ~ ds + \frac{1}{T} \int_{0}^{\vare T} \1\{M(s) \in \mathcal D  \} ~ ds.
\eeq
The second term on the r.h.s. above does not contribute in the limit $T\uparrow \infty$ first and $\vare \downarrow 0$ next. It thus suffices to compute the double limit for the first term. 

To this aim, we introduce the time $R_T>0$, which will play the role of the {\it early evolution}. The precise form is not particularly important and we will specify a choice only later. 
For the moment we only require that $R_T\to \infty$ as $T \uparrow \infty$, but {\it moderately}, i.e. $R_T = o(\sqrt{T})$ in the considered limit of large times. 
We rewrite the empirical distribution as
\beq \bea \label{splitting_vare}
& \frac{1}{T} \int_{\vare T}^T \1\{M(s) \in \mathcal D  \} ds = \\
& \quad =  \frac{1}{T} \int_{\vare T}^T \PP\left[M(s) \in \mathcal D  \mid \mathcal F_{R_T} \right] ds +\\
& \hspace{3cm} + \frac{1}{T}  \int_{\vare T}^T  \Big(  \1\{M(s) \in \mathcal D  \} - \PP\left[M(s) \in \mathcal D  \mid \mathcal F_{R_T} \right] \Big) ds\ .
\eea \eeq

We now state two theorems which immediately imply Theorem \ref{conjecture_compact}: Theorem \ref{convergence_to_der_mart} below addresses the first term 
on the r.h.s of \eqref{splitting_vare}, while Theorem \ref{slln} addresses the second term.

\begin{teor}[Almost sure convergence of the conditional maximum] \label{convergence_to_der_mart} 
Let $R_T \uparrow \infty$ as $T\uparrow \infty$ but with $R_T =o(\sqrt{T})$ in the considered limit. Then for any $s\in [\vare, 1]$, 
\beq
\lim_{T \uparrow \infty} \PP\left[M(T\cdot s) \in \mathcal D  \mid \mathcal F_{R_T} \right] = \int_\mathcal D d \left(\exp \left( -C Z e^{-\sqrt{2} x}\right) \right) \text{ almost surely}.
\eeq
\end{teor}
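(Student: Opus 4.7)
The plan is to exploit the Markov branching structure at time $R_T$: conditionally on $\mathcal{F}_{R_T}$, the process restarts as $n(R_T)$ independent BBMs from the positions $x_k(R_T)$, so the conditional probability factorizes as
\beq
\PP\!\left[M(Ts) \leq x \,\big|\, \mathcal{F}_{R_T}\right] \;=\; \prod_{k \leq n(R_T)} u\!\left(Ts - R_T,\; x + m(Ts) - x_k(R_T)\right),
\eeq
where $u$ is the KPP solution from \eqref{bbm_repr}. Writing $\PP[M(Ts) \in \mathcal{D}\mid \mathcal{F}_{R_T}]$ as the difference of two such products at $x=D$ and $x=d$ reduces matters to analysing this product pointwise in $x$.

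The next step is to re-express the argument of $u$ relative to the front at time $Ts-R_T$. From \eqref{centering_kpp} and the assumption $R_T = o(\sqrt{T})$, one checks that $m(Ts) - m(Ts - R_T) = \sqrt{2}\, R_T + o(1)$ uniformly in $s \in [\vare,1]$, since the logarithmic correction contributes only $-\tfrac{3}{2\sqrt{2}}\log\!\left(1-R_T/(Ts)\right) = o(1)$. Recalling the change of variables \eqref{defi_y}, the argument of $u$ becomes $m(Ts-R_T) + x + y_k(R_T) + o(1)$, so Bramson's uniform convergence \eqref{travelling_one} turns the product into a controlled perturbation of $\prod_k \omega\!\left(x + y_k(R_T)\right)$.

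The third step is to identify this product with $\exp\!\left(-Ce^{-\sqrt{2}x} Z\right)$. Combining the precise tail asymptotic $1 - \omega(y) = (C+o(1))\, y\, e^{-\sqrt{2}y}$ as $y\to\infty$ with $\log(1-\eta) = -\eta + O(\eta^2)$, and using that every $y_k(R_T)$ will be large with probability tending to one, I expect to obtain
\beq
-\log \prod_{k \leq n(R_T)} u\!\left(Ts - R_T,\, x + m(Ts) - x_k(R_T)\right) \;=\; C\, e^{-\sqrt{2}x}\, Z(R_T)\,\bigl(1+o(1)\bigr)
\eeq
almost surely, with $Z(R_T)$ as in \eqref{defi_martingale}. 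The Lalley--Sellke convergence $Z(R_T) \to Z$ of \eqref{y_to_zero} then closes the argument, and taking the difference at the two endpoints of $\mathcal{D}$ yields the claimed limit.

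The \textbf{main obstacle} will be the uniform error control in the third step: one needs a version of the tail expansion of $\omega$ (equivalently of Bramson's convergence) that holds \emph{simultaneously} for all $n(R_T)$ particles. Particles with $y_k(R_T) \leq A$ (those near the front at time $R_T$) must be shown to contribute negligibly almost surely, which is where the path-localization results from the authors' earlier work referenced in the outline should enter. For particles with large $y_k(R_T)$, the accumulated $o(1)$ errors must sum to at most $o(Z(R_T))$, and the square-summability of $\{z_k(R_T)\}$ needed to linearise the logarithm inside the sum should follow from the same localization input.
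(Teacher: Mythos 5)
Your skeleton (Markov factorization at time $R_T$, the shift $m(Ts)-m(Ts-R_T)=\sqrt{2}R_T+o(1)$, linearization of the logarithm, and $Z(R_T)\to Z$) is the same as the paper's, but the step you yourself flag as the ``main obstacle'' is precisely the decisive technical content, and the way you propose to fill it would not work. Bramson's uniform convergence \eqref{travelling_one} gives an \emph{additive} error $\sup_y|u(t,m(t)+y)-\omega(y)|\to 0$ at an unquantified rate, while the quantities you must control are the tails $1-u(Ts-R_T,\,m(Ts-R_T)+x+y_k(R_T))$, which are of size $y_k e^{-\sqrt{2}y_k}$, i.e.\ exponentially small in the $y_k(R_T)$, and there are $n(R_T)\approx e^{R_T}$ of them. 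Since the theorem is claimed for every $R_T=o(\sqrt{T})$ (e.g.\ $R_T=T^{0.49}$), an unquantified additive error multiplied by $e^{R_T}$ factors cannot be absorbed into $o(Z(R_T))$; what is needed is a \emph{two-sided tail estimate with multiplicative error}, uniform for arguments $X\to\infty$ with $X=o(\sqrt{t})$. This is exactly the paper's Lemma \ref{fundamental_tail} (proved from Bramson's monograph via the $\psi$-representation of \cite[Prop.\ 3.3]{abk_extremal}), namely
\beq
C\gamma(r)^{-1}Xe^{-\sqrt{2}X}\Bigl(1-\tfrac{X}{t-r}\Bigr)\;\leq\;\PP[M(t)\geq X]\;\leq\;C\gamma(r)Xe^{-\sqrt{2}X},\qquad \gamma(r)\downarrow 1,
\eeq
and your proposal supplies no substitute for it.

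Your fallback — handling particles with $y_k(R_T)\leq A$ and the square-summability of the $z_k$ via the path-localization results — is also misplaced: localization plays no role in this theorem (it is only used for the SLLN, Theorem \ref{slln}). The paper dispatches both points with the Lalley--Sellke martingale facts \eqref{y_to_zero} alone: since $Y(R_T)\to 0$ a.s., one has $\min_{k\leq n(R_T)}y_k(R_T)\to\infty$ a.s.\ (\eqref{y_to_infty}), so eventually \emph{no} particle has $y_k(R_T)\leq A$ and Lemma \ref{fundamental_tail} applies to every factor; and the quadratic correction from $\ln(1-a)\geq -a-a^2$ is bounded by $Z^{(2)}(R_T)\leq \max_k\bigl(y_k(R_T)^2e^{-\sqrt{2}y_k(R_T)}\bigr)\,Y(R_T)\to 0$ a.s., with the remaining error terms controlled by $\kappa\,Y(R_T)\to 0$. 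So the correct completion of your outline is to replace the appeal to \eqref{travelling_one} plus the asymptotics of $\omega$ by the quantitative tail bound of Lemma \ref{fundamental_tail}, and to replace the localization input by \eqref{y_to_zero} and \eqref{y_to_infty}.
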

The above statement is an improvement of \cite[Theorem 1]{lalley_sellke}, where the probability was conditioned on a {\it fixed} time
that only subsequently was let to infinity. The proof closely follows this caseand relies on precise estimates of the law of the maximal displacement obtained by Bramson \cite{bramson_monograph}.

Theorem \ref{convergence_to_der_mart} together with a change of variable and bounded convergence imply
\beq \bea 
& \lim_{\vare \downarrow 0} \lim_{T \uparrow \infty} \frac{1}{T} \int_{\vare T}^T \PP\left[M(s) \in \mathcal D  \mid \mathcal F_{R_T} \right] ds =
 \int_\mathcal D d \left(\exp  -C Z e^{-\sqrt{2} x} \right) \text{ a.s., }
\eea \eeq
which is the r.h.s. of \eqref{goal_two}. 

The integrand of the second term on the r.h.s of \eqref{splitting_vare} has mean zero.
Therefore, Theorem \ref{conjecture_compact} would immediately follow from the above considerations if a strong law of large number holds. This turns out to be correct. 


\begin{teor}[Strong Law of Large Numbers] \label{slln} For $\vare>0$ and $\mathcal D$ as above, 
\beq
\lim_{T\uparrow \infty} \frac{1}{T}  \int_{\vare T}^T  \Big(  \1\{M(s) \in \mathcal D  \} - \PP\left[M(s) \in \mathcal D  \mid \mathcal F_{R_T} \right] \Big) ds = 0 \text{ almost surely.}
\eeq
\end{teor}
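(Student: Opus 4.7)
The plan is to reduce the claim to an $L^2$ bound on
$$G_T \;:=\; \int_{\varepsilon T}^T \bigl(X_s - \E[X_s \mid \mathcal{F}_{R_T}]\bigr)\,ds, \qquad X_s := \mathbbm{1}\{M(s)\in\mathcal{D}\},$$
and then to boost this bound to almost sure convergence via Borel--Cantelli along a subsequence together with an interpolation estimate that uses the trivial uniform bound $|X_s - \E[X_s|\mathcal{F}_{R_T}]|\leq 1$. Since the integrand is $\mathcal{F}_{R_T}$--centered, Fubini gives
$$\E[G_T^2] \;=\; \int_{\varepsilon T}^T\!\!\int_{\varepsilon T}^T \E\bigl[\mathrm{Cov}\bigl(X_s,X_{s'}\,\big|\,\mathcal{F}_{R_T}\bigr)\bigr]\,ds\,ds'.$$
The analytic crux is therefore a quantitative \emph{decorrelation estimate} of the form $|\E[\mathrm{Cov}(X_s,X_{s'}|\mathcal{F}_{R_T})]|\leq \rho(|s-s'|)$, with $\rho(u)\to 0$ as $u\to\infty$, uniformly for $s,s'\in[\varepsilon T,T]$ and $T$ large. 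Given such an estimate, splitting the double integral at a window $a(T)\to\infty$ with $a(T)=o(T)$ yields $\E[G_T^2]\leq 2Ta(T) + \rho(a(T))T^2 = o(T^2)$, and the faster the decay of $\rho$ (integrable decay being the most favorable case), the stronger the resulting bound on $\E[G_T^2]/T^2$.

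To upgrade the variance bound to almost sure convergence I will choose a subsequence $T_n$, apply Chebyshev together with the decorrelation-based bound on $\E[G_{T_n}^2]/T_n^2$, and make the choice of $T_n$ sparse enough (e.g.\ $T_n = 2^n$ if the decay of $\rho$ is strong enough to produce a summable tail; otherwise $T_n = n^\alpha$ and a refined quantitative estimate) so that $\sum_n \PP[|G_{T_n}|/T_n > \eta] < \infty$, whence $G_{T_n}/T_n\to 0$ a.s.\ by Borel--Cantelli. The interpolation to arbitrary $T \in [T_n, T_{n+1}]$ exploits the monotonicity of $T \mapsto \int_{\varepsilon T}^T X_s\,ds$ in its endpoints and the trivial bound $|G_T-G_{T_n}| \leq 2(T_{n+1}-T_n) + 2\varepsilon(T-T_n)$, which is $o(T_n)$ for a mildly slowly growing subsequence.

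The main obstacle is clearly the decorrelation estimate. Conditionally on $\mathcal{F}_{R_T}$, the BBM decomposes into $n(R_T)$ independent descendant subtrees, so the $\E[\mathrm{Cov}(\cdot,\cdot|\mathcal{F}_{R_T})]$ splits into contributions indexed by pairs of subtrees; independence of subtrees kills cross terms and reduces the problem to controlling, for a single subtree, how correlated the events of having a late-time maximum near $m(s)+\mathcal{D}$ at the two times $s$ and $s'=s+u$ are. The strategy here is to use the path localization of \cite{abk_genealogies}: an extremal particle at time $s$ stays, up to a small probability error, in a prescribed tube joining $0$ to $(s,m(s))$, and similarly for $s'$. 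These two tubes coincide only on an initial interval whose length is $o(u)$, so the two extremal populations are genealogically disjoint except through the very early common ancestry, which is already absorbed into $\mathcal{F}_{R_T}$. Pasting this together with Bramson's sharp tail estimates from \cite{bramson_monograph} (used to compare $\PP[M(s)\in\mathcal{D},M(s')\in\mathcal{D}\,|\,\mathcal{F}_{R_T}]$ with the product $\PP[M(s)\in\mathcal{D}\,|\,\mathcal{F}_{R_T}]\cdot\PP[M(s')\in\mathcal{D}\,|\,\mathcal{F}_{R_T}]$, whose asymptotic value was already identified in Theorem \ref{convergence_to_der_mart}) should yield a decay $\rho(u)\to 0$ of the conditional covariance. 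Making this quantitative, and in particular controlling the rare events where the two extremal tubes share more than the initial ancestry, is the delicate part of the argument.
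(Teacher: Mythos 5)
Your overall framework (center at $\mathcal F_{R_T}$, bound the second moment of the time average via the conditional covariances, split the double integral at a mesoscopic gap, then Borel--Cantelli along a subsequence plus interpolation using boundedness) is essentially the paper's route: this is exactly what the Lyons-type Theorem \ref{lyons_int} packages, with the split at gap $T^\xi$ and the reduction to the localized maximum $M_{\text{loc}}$ with $r_T=(20\ln T)^{1/\delta}$ making all errors polynomially small. But the entire substance of the theorem is the quantitative decorrelation estimate, and there your proposal has a genuine gap, not just a deferred computation. First, the bound cannot take the form $\rho(|s-s'|)\to 0$ as a function of the gap alone: as Lalley and Sellke showed, every particle eventually has a descendant in the lead, so correlations of the frontier do not decay with the time gap in general; one needs both times of order $T$, a gap larger than $T^\xi$, conditioning on $\mathcal F_{R_T}$ with $R_T$ growing with $T$, and the resulting bound (Theorem \ref{uniform_bound_lyons}) is $T$-dependent, of order $(\ln T)^{\epsilon_1}e^{-(\ln T)^{\epsilon_2}}$ -- which is also exactly the rate needed to make the Borel--Cantelli/Lyons summability work, so the rate cannot be left unspecified.

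Second, and more seriously, your heuristic for why the conditional covariance is small is incorrect as stated. The time-$s$ and time-$s'$ localization tubes do \emph{not} coincide only on an initial interval of length $o(|s-s'|)$: both tubes follow the line of slope $\sqrt2$ with envelopes $f_{\gamma,\cdot}$, so they overlap over a macroscopic portion of $[0,s]$, and the most recent common ancestor of an extremal particle at time $s$ and one at time $s'$ can branch at any time up to $s-r_T$ -- far beyond $R_T$, hence \emph{not} absorbed into $\mathcal F_{R_T}$. What the paper actually proves is: (i) a reduction (Proposition \ref{a_priori_ref}, via a log expansion with quadratic remainder $\mathcal R_T$, since the conditional events are products over the $n(R_T)$ subtrees and the covariance does not simply split) of $\hat c_T$ to $\sum_k \wp(I_T,J_T;y_k(R_T))$, the per-subtree probability of having extremal, localized descendants at both times; (ii) the observation that tube disjointness near time $I$ forces the common ancestor of such a pair to branch before $I_T-r_T$; and (iii) a two-particle moment computation (a generalization of Sawyer's formula, Lemma \ref{basta}, Proposition \ref{last_unif}) in which the smallness comes from the entropic envelope forcing that common ancestor to sit at depth at least $f_{\alpha,J}(R_T+s)\gtrsim R_T^\alpha$ below the line, yielding the factor $e^{-\sqrt2 R_T^{\alpha}}=e^{-c(\ln T)^{\alpha/\delta}}$ after integrating over the branching time. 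None of this is supplied by "shared ancestry is absorbed into $\mathcal F_{R_T}$ plus Bramson's tail estimates"; the step you flag as "the delicate part" is precisely the content of Section 5 of the paper, and the mechanism you propose for it would not close the argument.
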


Contrary to the case of Theorem \ref{convergence_to_der_mart}, whose short proof is given in Section \ref{easy_prop}, 
the Strong Law of Large Numbers (SLLN) turns out to be quite delicate. Due to the possibly strong correlations among the Brownian particles, 
it is perhaps surprising that a law of large numbers holds at all.
Let $T$ be large and consider two times $s,s'\in[0,T]$. 
It is clear that if the distance between $s$ and $s'$ is of order one, say, then the extremal particles at $s$ are strongly correlated
with the ones at $s'$, since the children of extremal particles are very likely to remain extremal for some time.
Therefore, $s$ and $s'$ need to be {\it well separated} for the correlations to be weak.
On the other hand, and this is the crucial point, it is generally not true 
that the correlations between the extremal particles at time $s$ and $s'$ decay
as the distance between $s$ and $s'$ increases. As shown by Lalley and Sellke \cite[Theorem 2 and corollary]{lalley_sellke}, {\it "every particle born in a branching Brownian motion has a descendant particle in the lead at some future time"}. Hence, if $s$ and $s'$ are too far from each other (for example, if $s$ is of order one with respect to $T$ and $s'$ is of order $T$), correlations build up again and mixing fails. 
Therefore, weak correlations between the frontiers at two different times only set in at precise time scales. It turns out that
if $s$ and $s'$ are both of order $T$, $s,s'\in [\vare T, T]$ and well separated, i.e. $|s-s'|>T^\xi$ for some $0<\xi<1$, then 
the correlations between the frontiers are weak enough to provide a law of large numbers.
By weak enough, we understand a summability condition on the correlations that
lead to a SLLN by a theorem of Lyons, see Theorem \ref{lyons_int} below. See Figure \ref{global_view} a graphical representation. 
A precise control on the correlations is achieved by controlling the paths of extremal particles in the spirit of \cite{abk_genealogies} (see Section \ref{intricate} below for precise statements).

\begin{figure} 
\includegraphics[scale=0.4]{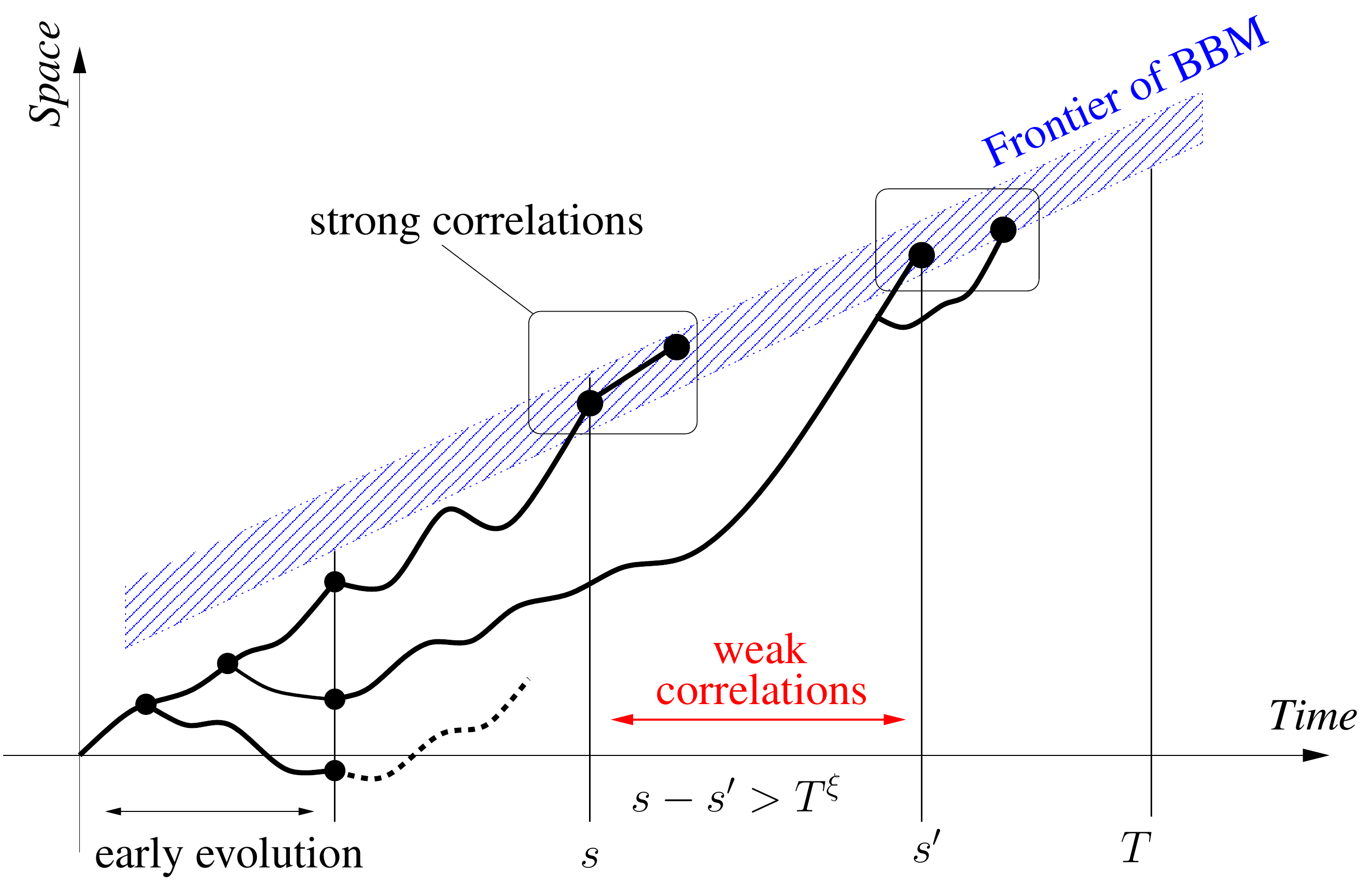}
\caption{Leaders and their ancestors.}
\label{global_view}
\end{figure}

\section{Almost sure convergence of the conditional maximum} \label{easy_prop} 


We start with some elementary facts that will be of importance.
First, observe that for $t,s>0$ such that $s=o(t)$ for $t\uparrow \infty$, the level of the maximum \eqref{centering_kpp} satisfies
\beq \bea \label{splitting_m}
m(t) & = m(t-s)+\sqrt{2} s+ \frac{3}{2\sqrt{2}}\ln\left(\frac{t-s}{t}\right) \\
& = m(t-s)+\sqrt{2} s+  o(1). 
\eea \eeq

Second, let $\{x_j(s), j\leq n(s)\}$  and, for $j=1\dots n(s)$, 
 $\{x_k^{(j)}(t-s), k\leq n^{(j)}(t-s)\}$
be all independent, identically distributed BBMs.  
The Markov property of BBM implies
\beq \label{self_sim}
\{x_k(t), k\leq n(t)\} \stackrel{(d)}{=} \{x_j(s) + x_k^{(j)}(t-s), j\leq n(s),\, k\leq n^{(j)}(t-s)\},
\eeq
In particular, if $\F_s$ denotes the $\sigma$-algebra generated by the process
 up to time $s$, the combinination of \eqref{splitting_m} and \eqref{self_sim} yields for $X\in\R$
\beq \label{equality_cond}
\PP\left[\forall_{k\leq n(t)}:\; \overline{x_k(t)} \leq X \mid \mathcal F_{s} \right] 
= \prod_{k\leq n(s)} \PP\left[\forall_{j\leq n(t-s)}:\; \overline{x_j(t-s)} \leq X + y_k(s) +o(1) \mid \mathcal F_{s} \right] \ .
\eeq
We will typically deal with situations where only a subset of $\{k: \, k=1, \dots, n(t)\}$ appears. In all such cases, the generalization of  \eqref{equality_cond} is straightforward.\\

A key ingredient to the proof of Theorem \ref{convergence_to_der_mart}  is a precise estimate on the right-tail of the distribution of the maximal displacement. 
It is related to \cite[Proposition 3.3]{abk_extremal}, which heavily relies
on the work by Bramson \cite{bramson_monograph}. 

\begin{lem} \label{fundamental_tail}
Consider $t\geq 0$ and $X(t)\geq  0$ such that $\lim_{t\uparrow \infty} X(t) = + \infty$ and
$X(t) = o(\sqrt{t})$ in the considered limit. Then, for $X(t)$ and $t$ both greater than $8r$,
\beq \label{tail_max}
C \gamma(r)^{-1} X(t) e^{-\sqrt{2} X(t)} \left(
1 -\frac{X(t)}{t-r} \right) \leq \PP\left[ M(t) \geq X(t) \right] \leq C \gamma(r) X(t) e^{-\sqrt{2} X(t)}
\eeq
for some $\gamma(r) \downarrow 1$ as $r\to \infty$ and $C$ as in \eqref{gumbel_like}. 
\end{lem}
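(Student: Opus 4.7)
The bound is essentially a quantitative restatement of Bramson's sharp asymptotics for the right tail of the KPP traveling wave, and my plan is to follow \cite[Proposition 3.3]{abk_extremal} almost verbatim, with only minor bookkeeping to produce the explicit prefactor $1 - X(t)/(t-r)$ in the lower bound. The starting point is the McKean correspondence,
$$\PP[M(t) \geq X(t)] \;=\; 1 - u\bigl(t, m(t) + X(t)\bigr),$$
so the lemma reduces to a two-sided estimate for $1 - u(t, m(t) + Y)$ when $Y = Y(t) \uparrow \infty$ with $Y = o(\sqrt{t})$.

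For this I would invoke the Feynman--Kac type representation of Bramson \cite{bramson_monograph}: writing $\psi(s,y) := 1 - u(s, m(s) + y)$, one has an integral equation in which the linearized (heat-equation) piece, after the shift by $m(s)$, produces a factor of $e^{-\sqrt{2} y}$ times a Brownian-type integral. Bramson's analysis shows that the nonlinear correction collapses, for times $s \geq 8r$ and arguments $Y \geq 8r$, to a multiplicative factor that lies in the interval $[\gamma(r)^{-1},\gamma(r)]$ with $\gamma(r) \downarrow 1$ as $r \to \infty$; the leading term is precisely $C\, Y e^{-\sqrt{2} Y}$, where the linear prefactor $Y$ has its origin in the $-\tfrac{3}{2\sqrt{2}} \log t$ logarithmic correction to $m(t)$. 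This directly yields the upper bound $\PP[M(t) \geq X(t)] \leq C\gamma(r)\, X(t) e^{-\sqrt{2} X(t)}$. For the lower bound the extra factor $1 - X(t)/(t-r)$ appears naturally as the probability that a Brownian bridge of length $t-r$ connecting an end-point displaced by $\sim X(t)$ stays below the straight line joining its endpoints; this is the bridge event on which Bramson's representation can be compared with the linear heat kernel, and outside of it one picks up only lower-order terms.

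The main obstacle is not conceptual but technical: one must verify that the intermediate time $r$ and the range $Y(t) = o(\sqrt{t})$ are in the regime where Bramson's quantitative bounds hold, and then carefully propagate the error through the Markov decomposition
$$u(t, m(t) + X(t)) \;=\; \E\!\left[\prod_{k \leq n(r)} u\bigl(t-r, m(t-r) + X(t) + y_k(r) + o(1)\bigr)\right]$$
without losing the sharp linear-in-$X(t)$ behavior. Since every one of these steps is already carried out, in exactly the form needed, in \cite[Section~3]{abk_extremal} and the cited parts of \cite{bramson_monograph}, the proof of Lemma \ref{fundamental_tail} consists essentially of specializing those estimates to the two choices of $Y$ arising here.
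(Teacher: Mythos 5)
Your proposal follows essentially the same route as the paper: reduce via McKean's correspondence to a two-sided estimate on $1-u(t,m(t)+X(t))$, invoke the $\psi$-bounds of \cite[Proposition 3.3]{abk_extremal} (built on Bramson's analysis, with the multiplicative error $\gamma(r)^{\pm 1}$ and $C(r)\to C$), check that these extend uniformly to the regime $X(t)=o(\sqrt t)$, and extract the prefactor $X(t)$ and the lower-bound correction $1-X(t)/(t-r)$ by expanding the Brownian-bridge factor $1-e^{-2y'\overline{x}/(t-r)}$, which is exactly the paper's Taylor-expansion step. The only loose point is attributing the linear prefactor to the $\frac{3}{2\sqrt 2}\ln t$ correction alone, whereas it arises from that correction combined with the first-order term of the bridge factor, but this does not affect the argument.
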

\begin{proof}
Let us denote by $\overline{u}(t, x) \defi 1 - u(t,x)$, with $u$  the distribution of the maximal displacement defined in  \eqref{bbm_repr}.
We define 
\beq \bea \label{ugly_one}
& \psi(r, t, x+\sqrt{2} t) \defi  \frac{e^{-\sqrt{2}x}}{\sqrt{t-r}} \int_0^\infty \frac{ dy'}{\sqrt{2\pi}} \cdot \overline{u}(r, y'+\sqrt{2}r) \cdot e^{y'\sqrt{2}} \times \\
& \hspace{2cm} \times  \left\{ 1- \exp\left( -2 y'\frac{x+\frac{3}{2\sqrt{2}}\ln t }{t-r} \right) \right\} \exp\left( -\frac{(y'-x)^2}{2(t-r)} \right) .
\eea \eeq
According to \cite[Proposition 3.3]{abk_extremal}, for $r$ large enough, $t\geq 8r$, and $x\geq 8r -\frac{3}{2\sqrt{2}}\ln(t)$, the following bounds
hold: 
\beq \label{ugly_two}
\gamma(r)^{-1}  \psi(r, t, x+\sqrt{2} t) \leq \overline{u}(t, x +\sqrt{2}t) \leq \gamma(r)   \psi(r, t, x+\sqrt{2} t)
\eeq
for some $\gamma(r) \downarrow 1$ as $r\to \infty$. \\

As $\sqrt{2} t = m(t)+ \frac{3}{2\sqrt{2}}\ln(t)$, by putting  $\overline{x} \defi x+ \frac{3}{2\sqrt{2}}\ln(t)$, 
we reformulate the above as
\beq \label{ugly_three}
\gamma(r)^{-1}  \psi(r, t, \overline{x}  + m(t)) \leq \overline{u}(t, \overline{x} + m(t)) \leq \gamma(r)   \psi(r, t, \overline{x} + m(t)).
\eeq
(The bounds in \eqref{ugly_three} hold for $\overline{x}\geq 8r$). \\

We lighten notations by setting
\beq
G(t,r; \overline{x}, y') \defi \overline{u}(r, y'+\sqrt{2}r) \cdot e^{y'\sqrt{2}} \cdot \exp\left( -\frac{(y'-\overline{x}+\frac{3}{2\sqrt{2}}\ln t)^2}{2(t-r)} \right),
\eeq
and rewrite \eqref{ugly_three} accordingly:
\beq \bea \label{ugly_four}
\psi(r, t, \overline{x}  + m(t)) & = \frac{ t^{3/2} e^{-\overline{x} \sqrt{2}}}{\sqrt{t-r}}   \int_0^\infty \frac{ dy'}{\sqrt{2\pi}} \cdot \left\{ 1- e^{-2 y'\frac{\overline{x}}{t-r} } \right\} \cdot  G(t,r; \overline{x}, y') \\
& = t(1+o(1))  e^{-\overline{x} \sqrt{2}} \int_0^\infty \frac{ dy'}{\sqrt{2\pi}} \cdot \left\{ 1- e^{-2 y'\frac{\overline{x}}{t-r} } \right\} \cdot  G(t,r; \overline{x}, y').
\eea \eeq

By a dominated convergence argument \cite[Prop. 8.3 and its proof]{bramson_monograph} one can prove that 
\beq \label{ugly_five}
C(r) \defi \lim_{t\to \infty} \int_0^\infty  2y'   G(t,r; \overline{x}, y')  \frac{dy'}{\sqrt{2\pi}}, 
\eeq
exists, {\it uniformly} for $\overline{x}$ in compacts. In fact, Bramson's argument easily extends to the case where $\overline{x} = o(\sqrt{t})$ (to see this, 
one simply expands the quadratic term in the Gaussian density appearing in the definition of the function $G$). Moreover, $C(r) \to C$ as $r\to \infty$, 
with $C$ as in \eqref{gumbel_like}, see \cite[p. 145-146]{bramson_monograph}. By Taylor expansion, 
\beq \label{ugly_approx}
2 y'\frac{\overline{x}}{t-r} - \frac{2(y')^2 \overline{x}^2}{(t-r)^2} +\frac{f(t, r; x, y')}{(t-r)^3} \leq\left\{ 1- e^{-2 y'\frac{\overline{x}}{t-r} } \right\}  \leq 2 y'\frac{\overline{x}}{t-r} \,,
\eeq
for some function $f(t, r; x, y')$ which is integrable with respect to $G(t,r; \overline{x}, y')dy'$. \\

Plugging \eqref{ugly_approx} in \eqref{ugly_four} we get the bounds
\beq\bea \label{ugly_six}
\overline{u}(t, \overline{x} + m(t)) \quad \geq & \quad \overline{x} e^{-\overline{x} \sqrt{2} } \int_0^\infty  2y' G(t,r; \overline{x}, y')   \frac{dy'}{\sqrt{2\pi}} + \\
& \hspace{2cm}  +\frac{\overline{x}^2 e^{-\overline{x} \sqrt{2} }}{t-r}   \int_0^\infty  2(y')^2 G(t,r; \overline{x}, y')   \frac{dy'}{\sqrt{2\pi}} +O((t-r)^{-2}), \\ 
\overline{u}(t, \overline{x} + m(t))  \quad \leq & \quad  \overline{x} e^{-\overline{x} \sqrt{2} }   \int_0^\infty  2y' G(t,r; \overline{x}, y')   \frac{dy'}{\sqrt{2\pi}},
\eea \eeq
for large enough $t$. 

The claim of the Lemma then follows by taking $\overline{x} \defi X(t)$ in  \eqref{ugly_six} and using\eqref{ugly_five}. 

\end{proof}

\begin{proof}[Proof of Theorem \ref{convergence_to_der_mart}]
This is a straightforward application of Lemma \ref{fundamental_tail} and the convergence of the derivative martingale. First we write
\beq \bea 
& \PP\left[M(T\cdot s) \in \mathcal D  \mid \mathcal F_{R_T} \right]  = \PP\left[M(T\cdot s) \leq D  \mid \mathcal F_{R_T} \right]- \PP\left[M(T\cdot s) \leq d  \mid \mathcal F_{R_T} \right].
\eea \eeq
We will prove almost sure convergence of the first term, the second being identical. 
Since $s$ is  in $(\vare, 1)$, we have $R_T = o(T\cdot s)$ for $T\uparrow \infty$. Therefore, 
by \eqref{splitting_m} and \eqref{self_sim}, and writing $\PP_M$ for integration with respect to the maximum,
\beq \bea
& \PP\left[ M(T\cdot s) \leq D  \mid \mathcal F_{R_T}  \right] = \\
& \quad = \prod_{k\leq n(R_T)} \PP_M\left[ M(Ts-R_T) \leq D + y_k(R_T) \mid \mathcal F_{R_T}  \right] \\
& \quad = \prod_{k\leq n(R_T)} \left\{ 1- \PP_M\left[ M(Ts-R_T) > D + y_k(R_T) \mid \mathcal F_{R_T}  \right]\right\} \\
& \quad = \exp\left( \sum_{k\leq n(R_T)} \ln\big( 1- \PP_M\left[ M(Ts-R_T) > D + y_k(R_T) \right]\big)\right) \\
\eea \eeq
It immediately follows from the almost sure convergence of the derivative martingale that
\beq \label{y_to_infty}
\lim_{R_T \uparrow \infty} \min_{k\leq n(R_T)} y_k(R_T) = +\infty  \text{ almost surely.}
\eeq
We may therefore use Lemma \ref{fundamental_tail} to establish upper- and lower bounds for the probability 
of the maximum being larger than $D+y_k(R_T)$, precisely:
\beq \bea \label{gamma}
& C \gamma(r)^{-1} \big\{D+y_k(R_T)\big\} \exp\Big\{-\sqrt{2}(D+y_k(R_T)\Big\} \leq \\
& \hspace{2cm} \leq \PP_M\left[ M(Ts-R_T) > D + y_k(R_T) \right] \leq\\ 
& \hspace{3cm} \leq C \gamma(r) \big\{D+y_k(R_T)\big\} \exp\Big\{-\sqrt{2}(D+y_k(R_T)\Big\} \left(
1 + \frac{(D+y_k(R_T))}{Ts-R_T-r} \right),
\eea \eeq
for $Ts-R_T\geq 8 r>0$. 

The main contribution to both bounds above comes from the $z_k$-terms  defined in \eqref{defi_y}. 
Precisely, we write \eqref{gamma} as 
\beq \bea \label{gamma_two}
& C \gamma(r)^{-1}e^{-\sqrt{2} D} z_k(R_T)+\omega_k(R_T) \leq \\
& \hspace{3cm} \leq \PP_M\left[ M(Ts-R_T) > D + y_k(R_T) \right] \leq\\ 
& \hspace{5cm} \leq C \gamma(r)e^{-\sqrt{2} D} z_k(R_T) + \Omega_k(R_T)\ ,
\eea \eeq
where
\beq
\bea
\omega_k(R_T) &\defi C~D~ \gamma(r)^{-1}e^{-\sqrt{2} D} e^{-\sqrt{2} y_k(R_T)}, \\
\Omega_k(R_T) &\defi  C~ D~ \gamma(r) \left(1 + \frac{(D+y_k(R_T))}{Ts-R_T-r} \right) \cdot e^{- \sqrt{2}D}e^{-\sqrt{2} y_k(R_T)}\ .
\eea
\eeq

By \eqref{gamma_two}, using that $-a\leq \ln(1-a)\leq -a+a^2/2$ (valid for $0< a< 1/2$), and with the above notations, 
we obtain
\beq \bea \label{conv_con_max_both}
& \exp\left(- C \gamma(r)^{-1} e^{-\sqrt{2}D} Z(R_T)- \sum_{k\leq n(R_T)} \omega_k(R_T) \right)\\
& \hspace{2cm} \leq \PP\left[ M(T\cdot s) \leq D  \mid \mathcal F_{R_T}  \right] \leq \\
&  \exp\left(- C \gamma(r)  e^{-\sqrt{2}D} Z(R_T) + \frac{C^2}{2} \gamma(r)^2  e^{-2\sqrt{2}D} Z^{(2)}(R_T) + \sum_{k\leq n(R_T)} ( -\Omega_k(R_T) + \frac{\Omega_k(R_T)^2}{2}) \right).
\eea \eeq
where $Z^{(2)}(R_T)\defi \sum_{k \leq n(R_T)} y_k(R_T)^2e^{-2\sqrt{2} y_k(R_T)} $.
To see that the $\omega$ terms in the lower bound do not contribute in the limit $T\uparrow \infty$ (recall that $R_T \uparrow \infty$ as well), 
we observe that for some  $\kappa>0$ large enough and $Y(R_T)$ as in \eqref{defi_martingale}, 
\beq
 \sum_{k\leq n(R_T)} \omega_k(R_T)  \leq \kappa \cdot Y(R_T) \to 0 \text{ almost surely,}
\eeq
by \eqref{y_to_zero}. Therefore the $\omega$ term in the lower bound do not contribute in the limit $T\uparrow \infty$.

Concerning the upper bound, the same argument as for the $\omega$ term together with the fact that $Z(R_T)\to Z$ as $T\to\infty$ by \eqref{y_to_zero} imply
that
\beq
 \sum_{k\leq n(R_T)} \Omega_k(R_T)  \to 0 \text{ almost surely.}
\eeq
The same is thus also true for  $\sum_{k\leq n(R_T)} \Omega_k(R_T)^2$. It remains to show that $Z^{(2)}(R_T)\to 0$ almost surely, but this is evident
since this sum is bounded from above by
\beq 
\max_{k\leq n(R_T)}\left( y_k(R_T)^2 e^{-\sqrt{2}y_k(R_T)}\right) \times Y(R_T),
\eeq
and both terms tend to zero,  a.s., 
as $T\uparrow\infty$   by \eqref{y_to_infty} and  \eqref{y_to_zero}. 
Therefore,
by \eqref{conv_con_max_both},
\beq
\lim_{T\uparrow \infty} \PP\left[ M(T\cdot s) \leq D  \mid \mathcal F_{R_T}  \right] = \exp\left(-C Z e^{-\sqrt{2} D} \right) \text{ almost surely.}
\eeq
This concludes the proof of Theorem \ref{convergence_to_der_mart}. 
\end{proof}

\section{The strong law of large numbers} \label{intricate}
This section is organized as follows.
We introduce in subsection \ref{path_loc_sec} a procedure concerning properties of the {\it paths} of extremal particles which we will refer to as {\it localization}. 
It is based on the description of the genealogies of extremal particles established in \cite{abk_genealogies}.
The details of the proof are given in subsection \ref{implementing the strategy}.

\subsection{Preliminaries and localization of the paths} \label{path_loc_sec}

The following fundamental result by Bramson provides bounds to the right tail of the maximal displacement. 
These bounds are not optimal (they are surpassed by those of Lemma \ref{fundamental_tail}, which are tight), but they are sufficient and simpler.

\begin{lem}\cite[Section 5]{bramson} \label{easy_max_bramson} Consider a branching Brownian motion $\{x_j(t)\}_{j\leq n(t)}$.  Then, for $0\leq y \leq t^{1/2}$ and $t\geq 2$,
\beq
\PP\left[ \max_{j\leq n(t)} x_j(t)-m(t)\geq y \right] \leq \gamma (y+1)^2 e^{-\sqrt{2}y},
\eeq
where $\gamma$ is independent of $t$ and $y$. 
\end{lem}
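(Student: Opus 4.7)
The plan is to follow the classical argument of Bramson \cite{bramson}, based on the many-to-one identity of branching Brownian motion combined with a truncated first-moment method. The starting point is the identity that for every non-negative measurable functional $F$ on continuous paths on $[0,t]$,
\[
\E\Bigl[\sum_{j\le n(t)} F\bigl((x_j(s))_{s\le t}\bigr)\Bigr] \;=\; e^t\,\E\bigl[F\bigl((B_s)_{s\le t}\bigr)\bigr],
\]
where $(B_s)_{s\le t}$ is a standard Brownian motion from $0$. The crudest application via Markov's inequality yields $\PP[M(t)\ge m(t)+y] \le e^t\,\PP[B_t\ge m(t)+y]$; substituting $m(t)=\sqrt{2}\,t-\tfrac{3}{2\sqrt{2}}\ln t$ and invoking the standard Gaussian tail asymptotic, the right-hand side is of order $t\,e^{-\sqrt{2}\,y}$ for $0\le y\le \sqrt{t}$. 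This exceeds the target by a full factor of $t$, so a genuine refinement is needed.

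To gain the missing $t^{-1}$, I would restrict the first-moment count to particles whose trajectories remain below a tilted straight-line barrier. Set
\[
\varphi_s := \tfrac{s}{t}\bigl(m(t)+y\bigr) + c, \qquad s\in[0,t],
\]
for a cushion $c = c(y)>0$ to be tuned later, and split
\[
\PP[M(t)\ge m(t)+y] \;\le\; \E\bigl[\#\{j: x_j(t)\ge m(t)+y,\,x_j(s)\le \varphi_s\;\forall s\le t\}\bigr] \;+\; \PP\bigl[\exists j, s: x_j(s)>\varphi_s\bigr].
\]
By many-to-one and conditioning on the endpoint $B_t$, the first term becomes $e^t$ times a Gaussian-weighted Brownian-bridge probability; the standard reflection-principle identity for a Brownian bridge staying below a line contributes a factor of order $c^{2}/t$ when the endpoint is near $m(t)+y$, which combined with the Gaussian tail produces a contribution of order $c^{2}\,e^{-\sqrt{2}\,y}$. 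The overshoot term is handled analogously, by decomposing according to the first time a particle crosses $\varphi$ and then applying the strong Markov property together with the naive first-moment bound at the fresh start; standard reflection or Girsanov estimates turn this contribution into something of order $(1+y)\cdot c\cdot e^{-\sqrt{2}\,y}$. Choosing $c$ of order $1+y$ balances the two contributions and yields the announced bound $\gamma(y+1)^2\,e^{-\sqrt{2}\,y}$.

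The main obstacle is the uniformity of these estimates in the full range $0\le y\le \sqrt{t}$ and $t\ge 2$: the barrier $\varphi$ must be placed so that the Gaussian tail asymptotic and the Brownian-bridge reflection approximation remain simultaneously valid, and the constants from each step must be tracked so that the final bound holds with a single $\gamma$ independent of $t$ and $y$. The careful bookkeeping of these constants is exactly the content of the technical estimates carried out in \cite[Section 5]{bramson}; the probabilistic outline above shows how the $(y+1)^2$ factor arises as the product of one $(y+1)$ from the bridge barrier probability and one from the overshoot balance.
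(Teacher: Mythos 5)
The paper does not prove this lemma at all: it is quoted directly from Bramson's 1978 paper (Section 5), and is only used as an input, so there is no internal proof to compare your argument against; your proposal has to stand on its own, and as written it has a genuine gap. The gap is in the overshoot term. With your straight-line barrier $\varphi_s=\frac{s}{t}(m(t)+y)+c$ and $c\asymp 1+y$, the claim that $\PP\left[\exists j,s:\ x_j(s)>\varphi_s\right]$ is of order $(1+y)\,c\,e^{-\sqrt{2}y}$ cannot be obtained by the tools you invoke (first moment at the first crossing time, reflection, Girsanov). When $y\lesssim \ln t$ the slope of $\varphi$ is below $\sqrt{2}$ by roughly $\frac{3\ln t}{2\sqrt{2}\,t}$, so near $s=t$ the barrier sits only $y+c$ above $m(t)$; a many-to-one computation over the first crossing time then gives an expected number of crossings of order $t^{3/2}e^{-\sqrt{2}(y+c)}$, which for $y=O(1)$ exceeds the target $\gamma(1+y)^2e^{-\sqrt{2}y}$ by a factor polynomial in $t$ (the expectation is dominated by rare realizations with an atypically large population, the classic failure of the naive first moment here). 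Doob's inequality for the exponential martingale controls crossings of lines of slope $\sqrt{2}$, and comparing $\varphi$ to such a line costs the same $t^{3/2}$ factor whenever $y\le\frac{3}{2\sqrt{2}}\ln t$. The true probability of the crossing event is indeed of order $(1+y+c)e^{-\sqrt{2}(y+c)}$, but establishing that is essentially a uniform-in-time version of the very lemma you are proving, so asserting it is circular; and deferring the "bookkeeping" to Bramson's Section 5 defers precisely the step at issue, since that section \emph{is} the cited proof.

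The first half of your decomposition is sound: restricting to paths below $\varphi$, the bridge/reflection factor of order $c^2/t$ against the Gaussian endpoint weight does give a contribution $O\left((1+y)^2e^{-\sqrt{2}y}\right)$, and your scheme also works in the regime $y\ge\frac{3}{2\sqrt{2}}\ln t$, where the barrier has slope at least $\sqrt{2}$ and the crossing probability is at most $e^{-\sqrt{2}c}$. What is missing is the treatment of barrier crossings for $y$ small compared to $\ln t$ — which is the main regime the lemma is used for. Closing it requires either curved or broken barriers (of the type of the entropic envelope $f_{\alpha,t}$ appearing in Section 4 of this paper, combined with an iteration/bootstrapping over scales) or Bramson's original argument via the KPP equation and comparison; a single straight line from $(0,c)$ to $(t,m(t)+y+c)$ with a first-moment bound on crossings does not suffice.
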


We also recall an important property of the paths of extremal particles established by the authors in 
\cite{abk_genealogies}. We 
introduce some notation. With  $t \in \R_+$ and $\gamma>0$, we define  
\beq \label{defi_f}
f_{\gamma,t}(s) \defi \begin{cases}
                    s^\gamma & 0\leq s \leq t/2, \\
		    (t-s)^\gamma & t/2\leq s \leq t. 
                   \end{cases}
\eeq
We now choose values 
\beq 
0 <\alpha<  1/2 < \beta < 1,
\eeq 
and introduce the  {\it time-$t$ entropic} envelope, and the {\it time-$t$ lower} envelope respectively: 
\beq
F_{\alpha,t} (s) \defi \frac{s}{t} m(t) - f_{\alpha,t}(s), \quad 0\leq s\leq t,
\eeq
and
\beq
F_{\beta,t} (s) \defi \frac{s}{t} m(t) - f_{\beta,t}(s), \quad 0\leq s\leq t.
\eeq
($m(t)$ is the level of the maximum of a BBM of length $t$). By definition, 
\beq 
F_{\beta,t} (s) < F_{\alpha,t} (s),
\eeq
and
\beq 
F_{\beta,t} (0) = F_{\alpha,t}(0) =0, \quad F_{\beta,t} (t) = F_{\alpha,t}(t) =m(t). 
\eeq
The space/time region between the entropic and lower envelopes will be denoted throughout as the  {\it time-t tube}, or simply the {\it  tube}. 

By a slight abuse of notation, given a particle $k\leq n(t)$ which is at position $x_k(t)$ at time $t$, we refer to its {\it path} as $x_k(s)$ where $0\leq s\leq t$. 
Moreover, we will say that a particle $k$ is {\it localized} in the time $t$-tube during the interval $(r, t-r)$ if and only if
$$
F_{\beta,t} (s)\leq x_k(s) \leq F_{\alpha,t}(s), \forall s\in (r, t-r) \ .
$$
We  say that it is {\it not localized} if the above requirement fails  
for some 
$s$ in $(r,t-r)$.
The following proposition gives strong bounds to the probability of finding
 particles that are close to the level of the maximum at given times but not
 localized. 
It follows directly from the bounds derived in the course of the proof
of \cite[Corollary 2.6]{abk_genealogies}, cf. equations (5.5), (5.54), (5.62) and (5.63).

\begin{prop} \label{path_loc_prop}
Let the subset $\mathcal D = [d, D]$ be given,  with $-\infty < d < D \leq \infty$. There exist $r_o, \delta>0$ depending on 
$\alpha, \beta$ and $\mathcal D$ such that
\beq \label{tube} \bea
& \sup_{t\geq 3r_o} \PP\big[ \exists_{k\leq n(t)} \, \overline{x_k(t)} \in \mathcal D \; \text{but the path is not} \\
& \hspace{3cm} \text{localized in the time-$t$ tube during} \; (r, t- r)\big] \leq \exp\left(-r^\delta\right).
\eea \eeq
\end{prop}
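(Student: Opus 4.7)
The plan is a union bound over two ``bad'' sub-events---the ancestor path crosses the upper (entropic) envelope $F_{\alpha,t}$ at some $s \in (r,t-r)$, or it crosses the lower envelope $F_{\beta,t}$---combined with the many-to-one lemma and Bramson's sharp tail estimate (Lemma \ref{easy_max_bramson}). In each case I bound the expected number of extremal particles whose ancestor violates the corresponding barrier, and then conclude by Markov's inequality.

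For the \emph{lower} envelope, a direct Gaussian calculation suffices. Applying many-to-one at time $s$, the expected number of extremal particles whose ancestor lies below $F_{\beta,t}(s) = (s/t)\, m(t) - s^\beta$ is bounded by $e^t$ times the joint probability that a single Brownian motion reaches $m(t) + \mathcal{D}$ at time $t$ while performing a dip of order $s^\beta$ below its bridge mean at time $s$. Since $\beta > 1/2$, such a dip is a genuine Gaussian large deviation with cost of order $\exp(-c\, s^{2\beta-1})$. Summing over a unit-time discretization of $(r, t-r)$---and using $f_{\beta,t}(s) = (t-s)^\beta$ symmetrically on $[t/2, t-r]$---yields a bound of order $\exp(-c'\, r^{2\beta-1})$.

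For the \emph{upper} (entropic) envelope, the naive single-particle first-moment estimate is too weak and one must exploit the branching structure. I would condition on the first time $s^*$ at which the ancestor of a putative extremal particle exceeds $F_{\alpha,t}(s^*)$. Two ingredients then combine: (i) a ballot-type first-moment estimate---using the constraint that the ancestor stayed below $\sqrt{2}\, u$ for $u \leq s^*$, lest the BBM already produce too many particles above its typical maximum---bounds the expected number of ancestors at height near $F_{\alpha,t}(s^*)$ by a term growing as $\exp(\sqrt{2}\, s^{*\alpha})$ up to polynomial corrections; and (ii) Bramson's tail applied to the sub-BBM of duration $t - s^*$ rooted at such an ancestor, which must still produce a descendant in $m(t) + \mathcal{D}$, contributes a factor of order $s^{*\alpha} \exp(-\sqrt{2}\, s^{*\alpha})$. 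The two exponential factors cancel, leaving a power-law residue in $s^*$ which, together with the ballot correction, can be summed over $s^* \in (r, t-r)$ to produce the claimed decay $\exp(-r^\delta)$ for some $\delta > 0$ depending on $\alpha$.

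The main technical hurdle---exactly what is carried out in the proof of \cite[Corollary 2.6]{abk_genealogies}---is threefold: (a) upgrading the discrete time-slice union bound to a continuous supremum over $s$, by standard Brownian modulus-of-continuity arguments; (b) controlling the two boundary regions $s \in (r, 2 r_o)$ and $s \in (t - 2 r_o, t-r)$, where the envelope deviation $f_{\gamma,t}(s)$ is small and one must take $r_o$ large enough that $s^\alpha$ and $s^\beta$ dominate all polynomial prefactors; and (c) making every estimate uniform in $t \geq 3 r_o$, so that the final bound $\exp(-r^\delta)$ with $\delta = \delta(\alpha, \beta, \mathcal{D}) > 0$ holds in the supremum over $t$.
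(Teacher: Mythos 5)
Note first that the paper does not reprove Proposition \ref{path_loc_prop} at all: it imports it verbatim from the bounds obtained in the course of the proof of Corollary 2.6 in \cite{abk_genealogies} (their equations (5.5), (5.54), (5.62), (5.63)), offering only the ``energy vs.\ entropy'' heuristic. Your sketch follows the same family of ideas (two barrier events, many-to-one, Bramson's tail), but two of your steps, as written, do not yield the stated bound. The clearest gap is the lower envelope. Your ``direct Gaussian calculation'' via many-to-one gives $e^t\,\PP\bigl[x(t)-m(t)\in\mathcal D,\ \text{dip of order } s^\beta \text{ at time } s\bigr]\asymp t\,e^{-\sqrt2 d}\,e^{-c\,s^{2\beta-1}}$, because $e^t\,\PP[x(t)\ge m(t)+d]\asymp C\,t\,e^{-\sqrt2 d}$; summing over $s$ leaves a bound of order $t\,e^{-c\,r^{2\beta-1}}$, which is \emph{not} dominated by $\exp(-r^\delta)$ uniformly in $t\ge 3r_o$ --- and the statement \eqref{tube} is a supremum over $t$. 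This linear-in-$t$ overcount is exactly the classical discrepancy between $\E[\#\{k:x_k(t)\ge m(t)+d\}]$ and $\PP[M(t)\ge d]$, and it can only be removed by keeping a ballot-type constraint in the first moment, i.e.\ by first establishing the upper localization and then estimating the lower-envelope dip only for particles already constrained below an upper barrier (which is how the cited proof proceeds). So the lower-envelope step needs an additional idea; it is not a uniformity bookkeeping issue to be absorbed into your point (c).

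The upper-envelope accounting also does not close. In your steps (i) and (ii) the factors $e^{\sqrt2 s^{*\alpha}}$ and $e^{-\sqrt2 s^{*\alpha}}$ cancel exactly, and you then assert that the remaining power-law residue, summed over $s^*\in(r,t-r)$, ``produces the claimed decay $\exp(-r^\delta)$.'' That is a non sequitur: a sum over $s^*>r$ of power-law terms decays at best polynomially in $r$, and without a ballot correction the per-slice residue is not even summable uniformly in $t$. The stretched-exponential form of \eqref{tube} is not cosmetic in this paper: the choice $r_T=(20\ln T)^{1/\delta}$ turns the right-hand side into $T^{-20}$ in \eqref{bc}, which is what feeds the Borel--Cantelli argument while keeping $R_T=o(\sqrt T)$; a merely polynomial bound in $r$ would not support that choice. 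To obtain the claimed decay you must extract, per time scale, a genuinely (stretched-)exponentially small factor from the combined barrier and displacement constraints, as is done quantitatively in the cited equations of \cite{abk_genealogies}, rather than rely on exact cancellation plus summation. Two smaller inaccuracies: Bramson's bound (Lemma \ref{easy_max_bramson}) carries $(1+y)^2$ rather than $y$, and the displacement required from height $F_{\alpha,t}(s^*)$ contains the extra term $\frac{3}{2\sqrt2}\bigl[\ln\frac{t-s^*}{t}+\frac{s^*}{t}\ln t\bigr]$, whose effect for $s^*$ near $t$ has to be tracked explicitly.
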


What lies behind the Proposition is a phenomenon of "energy vs. entropy" which is absolutely fundamental 
for the whole picture. This is explained in detail in \cite{abk_genealogies}, but, for the reader's convenience, we 
briefly sketch the argument. 

As it turns out, at any given time $s\in (r,t-r)$ well 
inside the lifespan of a BBM, there are simply not enough particles lying above the 
entropic envelope for their offspring to make the jumps which eventually bring them to the edge at time
$t$. On the other hand, although there are plenty of ancestors lying below the lower envelope, 
their position is so low that again none of their offspring will make it to the edge at time $t$. A delicate balance  between number and positions of ancestors 
has to be met, and this feature is fully captured by the tubes. \\

With $\delta= \delta(\alpha, \beta, \mathcal D)$ as in Proposition \ref{path_loc_prop} we define 
\beq \label{choice_r_T}
r_T \defi (20 \ln T)^{1/\delta} \ .
\eeq 
We now consider the maximum of the particles at time $s$ that are also localized during the interval $(r_T, s-r_T)$, see Figure \ref{all_loc_max} for a graphical representation.  
We denote this maximum by $M_{\text{loc}}(s)$. 
\begin{figure} 
\includegraphics[scale=0.4]{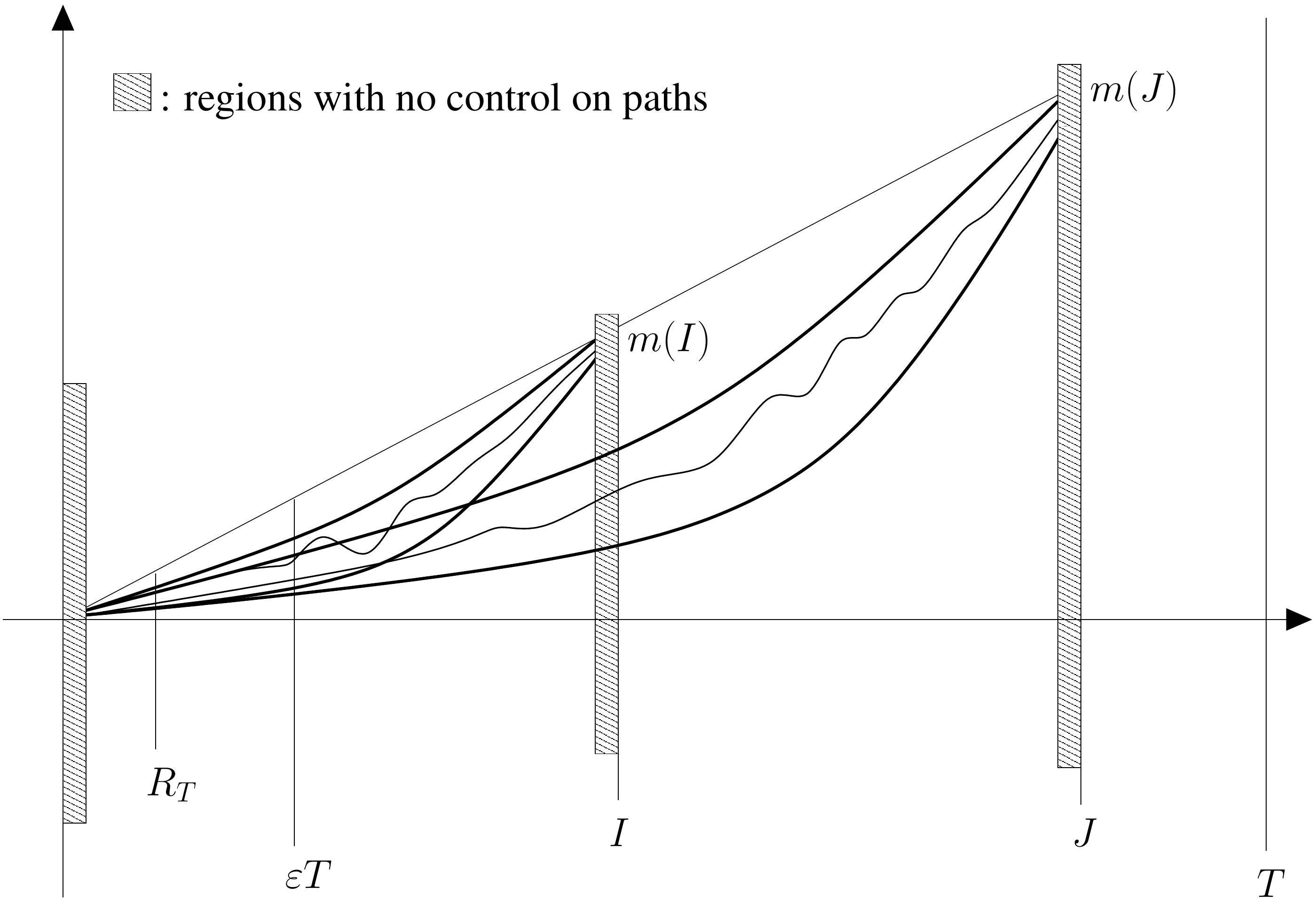} 
\caption{Maxima at different times $I,J$ are localized.}
\label{all_loc_max}
\end{figure} 
With this notation, by Proposition \ref{path_loc_prop} and the choice \eqref{choice_r_T}, 
\beq \bea \label{bc}
0\leq \PP\left[M(s) \in \mathcal D \right] - \PP\left[ M_{\text{loc}}(s) \in \mathcal D\right] \leq \frac{1}{T^{20}}.
\eea \eeq
We pick $R_T \defi 40\cdot r_T$, with $r_T$ as in \eqref{choice_r_T}. This choice clearly satisfies $R_T = o(\sqrt{T})$ as required in Theorem
\ref{convergence_to_der_mart}. We emphasize that the prefactor is a choice. Only the condition $R_T > r_T$ is needed. \\

We assume henceforth without loss of generality that both $T$ and $\vare T$ are integers. 

\subsection{Implementing the strategy} \label{implementing the strategy}
Recall that Theorem \ref{slln} asserts that 
\beq \label{claim_rest}
\text{Rest}_{\vare, \mathcal D}(T) \defi  \frac{1}{T}  \int_{\vare T}^T \Big(\1\{M(s) \in \mathcal D  \} - \PP\left[M(s) \in \mathcal D  \mid \mathcal F_{R_T} \right] \Big) ds
\eeq
tends to zero as $T$ goes to $\infty$.
In order to prove the claim, we consider $\text{Rest}^{\text{loc}}_{\vare, \mathcal D}(T)$, defined as $\text{Rest}_{\vare, \mathcal D}(T)$ but with the requirement that 
all particles in $\mathcal D$ are localized: 
\beq \bea \label{important_loc} 
\text{Rest}^{\text{loc}}_{\vare, \mathcal D}(T) \defi \frac{1}{T}  \int_{\vare T}^T \Big(\1\{M_{\text{loc}}(s) \in \mathcal D  \} - \PP\left[M_{\text{loc}}(s) \in \mathcal D  \mid \mathcal F_{R_T} \right] \Big) ds \ .
\eea \eeq

We now claim that the large $T$-limit of $\text{Rest}^{\text{loc}}_{\vare, \mathcal D}(T)$ and that of $\text{Rest}_{\vare, \mathcal D}(T)$ coincide  (provided one of the two exists, but this will become 
apparent below). 

\begin{lem} \label{if_exist_coincide} With the above notation, 
\beq
\lim_{T\uparrow \infty} \Big(\emph{Rest}_{\vare, \mathcal D}(T)-\emph{Rest}^{\emph{loc}}_{\vare, \mathcal D}(T)\Big) = 0 \text{ almost surely. }
\eeq
\end{lem}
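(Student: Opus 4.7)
My plan is to bound both pieces of $\text{Rest}_{\vare, \mathcal D}(T) - \text{Rest}^{\text{loc}}_{\vare, \mathcal D}(T)$ in $L^1$ uniformly in $s$, and then conclude by a Borel--Cantelli argument. Split the difference as $A_T - B_T$, where
\begin{equation*}
A_T := \frac{1}{T}\int_{\vare T}^T \big[\1\{M(s)\in \mathcal D\} - \1\{M_{\text{loc}}(s)\in \mathcal D\}\big]\, ds
\end{equation*}
and $B_T$ is the analogous integral of the difference of conditional probabilities given $\mathcal F_{R_T}$.

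The central pointwise observation is that, since $M_{\text{loc}}(s) \leq M(s)$ by construction, the two indicators can differ only if the maximal particle $k^\star(s)$ at time $s$ satisfies $\overline{x_{k^\star}(s)} \geq d$ \emph{and} its path fails to lie in the time-$s$ tube during $(r_T, s-r_T)$. Indeed, tracing the two possible cases of disagreement, either $M(s) \in [d,D]$ with $M_{\text{loc}}(s) < d$, or $M(s) > D$ with $M_{\text{loc}}(s) \in [d,D]$, forces the top particle to have recentered position $\geq d$ while being \emph{not} localized (otherwise one would have $M_{\text{loc}}(s) = M(s)$). Hence
\begin{equation*}
\big|\1\{M(s)\in \mathcal D\} - \1\{M_{\text{loc}}(s)\in \mathcal D\}\big| \leq \1\big\{\exists k\leq n(s):\ \overline{x_k(s)} \in [d,\infty),\ \text{path not localized in }(r_T, s-r_T)\big\}.
\end{equation*}
Applying Proposition \ref{path_loc_prop} with the half-infinite interval $[d,\infty)$ (allowed by the statement) and with $r = r_T = (20 \ln T)^{1/\delta}$ as in \eqref{choice_r_T}, the probability of the event on the right is at most $\exp(-r_T^\delta) = T^{-20}$, uniformly for $s \in [\vare T, T]$ once $T$ is large enough that $\vare T \geq 3 r_o$ and $r_T \geq r_o$.

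Taking expectations, and for $B_T$ using conditional Jensen to absorb $\mathcal F_{R_T}$, gives $\E|A_T| \leq T^{-20}$ and $\E|B_T| \leq T^{-20}$. Markov's inequality applied along integer $T = n$ then yields $\PP[|A_n|+|B_n| > n^{-5}] \leq 2\, n^{-15}$, which is summable, so Borel--Cantelli delivers $A_n + B_n \to 0$ almost surely. The extension to continuous $T$ is routine: the integrands of $A_T$ and $B_T$ take values in $[-1,1]$, and both $r_T$ and $R_T$ vary continuously (and slowly) with $T$, so $T$ can be sandwiched between consecutive integers with negligible error. The only step requiring genuine care is the pointwise indicator comparison identifying the top particle as the one whose path is non-localized; once this is in place, Proposition \ref{path_loc_prop} with the $r_T$ chosen in \eqref{choice_r_T} was tailor-made precisely to give the $T^{-20}$ tail needed here, so there is no substantial remaining obstacle.
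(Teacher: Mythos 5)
Your argument is correct and essentially the paper's own: both proofs bound the discrepancy between the localized and unlocalized quantities in expectation by $T^{-20}$ via Proposition \ref{path_loc_prop} with the choice \eqref{choice_r_T} (cf.\ \eqref{bc}), and conclude by Markov's inequality and Borel--Cantelli along integer $T$, the paper simply assuming $T\in\N$ without loss of generality and handling the conditional-probability term by the same computation rather than by conditional Jensen. One small touch-up: when you invoke Proposition \ref{path_loc_prop} for the half-line $[d,\infty)$, the exponent $\delta$ is the one attached to that set, so \eqref{choice_r_T} should be read with this (possibly smaller) $\delta$ --- a harmless relabelling that changes nothing else, and your two-sided pointwise indicator comparison is in fact slightly more careful than the paper's one-sided use of \eqref{bc}.
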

%

\begin{proof}[Proof of Lemma \ref{if_exist_coincide}]
We have
\beq \bea
&  \text{Rest}_{\vare, \mathcal D}(T) - \text{Rest}^{\text{loc}}_{\vare, \mathcal D}(T)= \\
&  = \frac{1}{T}  \int_{\vare T}^T \Big(\1\{M(s) \in \mathcal D  \} -\1\{M_{\text{loc}}(s) \in \mathcal D  \}\Big) ds \\
& \qquad - \frac{1}{T}  \int_{\vare T}^T \Big(\PP\left[ M(s) \in \mathcal D  \mid \mathcal F_{R_T} \right]-\PP\left[  M_{\text{loc}}(s) \in \mathcal D  \mid \mathcal F_{R_T} \right]\Big) ds \\
& \equiv \boldsymbol{(1)_{T, \vare}} - \boldsymbol{(2)_{T, \vare}}.
\eea \eeq
The proof that $\lim_{T\uparrow \infty} \boldsymbol{(1)_{T, \vare}} =  0$ and $\lim_{T\uparrow \infty} \boldsymbol{(2)_{T, \vare}} = 0$ (almost surely) is identical and
relies on an application of the Borel-Cantelli lemma. We thus prove only the first limit. Let $\epsilon >0$. By the Chebeychev inequality, 
\beq \bea 
&\PP\left[   \boldsymbol{(1)_{T, \vare}} >\epsilon \right]  \leq \frac{1}{T \epsilon} \int_{\vare T}^T \Big(
\PP\left[ M(s) \in \mathcal D  \right]-\PP\left[  M_{\text{loc}}(s) \in \mathcal D  \right]\Big) ds \leq \\
& \stackrel{\eqref{bc}}{\leq} \frac{1-\vare}{\epsilon} T^{-20},
\eea\eeq
which is summable in $T$ (recalling that we assume $T \in \N$). Therefore, by Borel-Cantelli, 
\beq 
\PP\left[ \{\boldsymbol{(1)_{T, \vare}} > \epsilon\} \; \text{infinitely often} \right] = 0.
\eeq
As the above holds for all $\epsilon>0$ we have that $\boldsymbol{(1)_{T, \vare}}$ converges to
$0$ as $T\uparrow \infty$ almost surely, and concludes the proof of Lemma \ref{if_exist_coincide}.
\end{proof}

The following result is the major tool to establish the SLLN for the term $\text{Rest}^{\text{loc}}_{\vare, \mathcal D}(T)$.
(By Lemma \ref{if_exist_coincide}, this will then imply that the same is true for $\text{Rest}_{\vare, \mathcal D}(T)$).
The result is a small extension of a theorem of Lyons \cite[Theorem 1]{lyons},
 where the statement is given for the sum of random variables.
\begin{teor} \label{lyons_int}
Consider a process $\{X_s\}_{s\in \R_+}$ such that $\E[X_s]=0$ for all $s$. Assume furthermore that the random variables are 
uniformly bounded, say $\sup_s |X_s|\leq 2$ almost surely. If
\beq \label{sum_corr}
\sum_{T=1}^\infty \frac{1}{T} \E\Big[\Big| \frac{1}{T} \int_0^T X_s ~ds \Big|^2\Big]<\infty, 
\eeq
then
\beq
\lim_{T\to\infty} \frac{1}{T} \int_0^T X_s ~ ds = 0, \text{ almost surely.}
\eeq
\end{teor}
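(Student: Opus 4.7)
The plan is to follow a Lyons-style argument, adapted to continuous time, combining a Tonelli reduction with a ``stickiness'' step provided by the uniform bound $\sup_s|X_s|\le 2$. Setting $S_T:=\int_0^T X_s\,ds$ and $A_T:=S_T/T$, the first step is Tonelli: since $A_T^{2}/T\ge 0$, the hypothesis \eqref{sum_corr} rewrites as
\beq \label{lyons_tonelli_step}
\sum_{T=1}^{\infty}\frac{A_T^{2}}{T}<\infty \qquad \text{almost surely,}
\eeq
the sum running over positive integers. I then fix a sample point in the corresponding full-measure event.

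I show $A_T\to 0$ a.s.\ along integers by contradiction. Suppose that for some $\epsilon\in(0,1)$ there are infinitely many integers $T$ with $|A_T|\ge 2\epsilon$. For any such $T$ and any integer $T'\in[T,T(1+\epsilon/4)]$ the uniform bound gives $|S_{T'}-S_T|\le 2(T'-T)\le \epsilon T/2$, hence
\beq \label{lyons_stick}
|A_{T'}|=\frac{|S_{T'}|}{T'}\ge\frac{|S_T|-\epsilon T/2}{T(1+\epsilon/4)}\ge \epsilon.
\eeq
Thus each bad integer $T$ spawns a whole bad block $[T,T(1+\epsilon/4)]\cap\N$, on which the summand $A_{T'}^{2}/T'$ in \eqref{lyons_tonelli_step} dominates $\epsilon^{2}/T'$. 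The partial sum of these summands over the block stays above a positive constant $c(\epsilon)$ uniformly in large $T$, since $\sum_{T'\in[T,T(1+\epsilon/4)]\cap\N}1/T'\ge(\epsilon/4)/(1+\epsilon/4)$. Extracting a subsequence $T_k\to\infty$ of bad integers with $T_{k+1}\ge T_k(1+\epsilon/4)$ produces infinitely many disjoint bad blocks, each contributing at least $c(\epsilon)$ to the series in \eqref{lyons_tonelli_step}, contradicting its convergence. Therefore $A_T\to 0$ almost surely along $T\in\N$.

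The extension to continuous $T$ is then immediate: for $T\in[n,n+1)$ with $n\in\N$ the uniform bound gives $|A_T|\le(|S_n|+2)/T\le |A_n|+2/n\to 0$ a.s. The main obstacle, and the reason a naive Chebyshev--Borel-Cantelli argument does not suffice, is that \eqref{sum_corr} delivers only the weighted summability $\sum_T\E[A_T^{2}]/T<\infty$, which does not imply $\E[A_T^{2}]\to 0$ nor summability of $\E[A_T^{2}]$ along any natural geometric subsequence. The stickiness step \eqref{lyons_stick} is the crux: the uniform bound inflates each isolated bad integer into a multiplicative block on which $|A_{T'}|$ stays bounded below, converting the weighted summability into a genuine contradiction with the harmonic-type divergence of $\sum 1/T'$ along the bad set.
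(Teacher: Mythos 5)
Your proof is correct, and it takes a somewhat different route from the paper's. The paper follows Lyons directly: it first extracts, via \cite[Lemma 2]{lyons}, a deterministic subsequence $T_k\uparrow\infty$ with $T_{k+1}/T_k\to 1$ along which $\sum_k \E\big[\big|\frac{1}{T_k}\int_0^{T_k}X_s\,ds\big|^2\big]<\infty$, applies Fubini to get convergence to zero almost surely along $T_k$, and then interpolates between consecutive $T_k$ using the uniform bound $|X_s|\le 2$. You instead apply Tonelli once to the full weighted series in \eqref{sum_corr}, fix a sample path, and run a deterministic contradiction: any integer $T$ with $|A_T|\ge 2\epsilon$ is inflated, again by the uniform bound, into a multiplicative block $[T,T(1+\epsilon/4)]$ on which $|A_{T'}|\ge\epsilon$, so infinitely many such integers would make $\sum_T A_T^2/T$ diverge, contradicting its a.s.\ finiteness. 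Both arguments rest on the same two ingredients (Tonelli/Fubini and the fact that the average cannot move much across a window of multiplicative width $1+O(\epsilon)$), but yours is self-contained --- it replaces the citation of Lyons's subsequence lemma by the block-counting step --- and it also yields the limit along real $T\to\infty$, whereas the paper's proof explicitly treats only integer $T$ (which suffices for its application, where $T$ is assumed integer). Two cosmetic points: to keep the blocks disjoint, select the bad integers with $T_{k+1}>T_k(1+\epsilon/4)$ strictly (a single shared integer would in any case be harmless), and, as you note, the uniform lower bound $c(\epsilon)$ on the block sums only holds once $T$ is large enough that the block contains of order $\epsilon T$ integers.
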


\begin{proof} 
The extension to integrals is straightforward. In fact, by the summability assumption, 
we can find a subsequence $T_k\in\N$ of times such that
\beq
\sum_{k=1}^\infty  \E\Big[\Big| \frac{1}{T_k} \int_0^{T_k} X_t ~dt \Big|^2\Big]<\infty
\eeq
where $T_k\to\infty$ and $T_{k+1}/T_k\to 1$. (See \cite[Lemma 2]{lyons}). Therefore by Fubini, the sum without the expectation is almost surely finite, 
and we must have
\beq
\lim_{k\to\infty}\frac{1}{T_k} \int_0^{T_k} X_t ~dt\to 0 \text{ almost surely .}
\eeq
It remains to show this is true for all $T\in\N$. This is easy since the variables are bounded. For any $T$, there exists $k$
such that $T_k\leq T \leq T_{k+1}$. Thus
\beq
\Big| \frac{1}{T} \int_0^T X_t ~dt \Big| \leq \Big| \frac{1}{T_k} \int_0^{T_k} X_t ~dt \Big|  + \max_{1\leq s \leq T_{k+1}-T_k}\Big| \frac{1}{T_k} \int_{T_k}^{T_{k}+s} X_t ~dt \Big| \ .
\eeq
The first term goes to zero by the previous argument. The second term goes to zero since
\beq
\max_{1\leq s \leq T_{k+1}-T_k}\Big|\frac{1}{T_k}\int_{T_k}^{T_{k}+s} X_t ~dt \Big|\leq \frac{T_{K+1}-T_k}{T_k} \ ,
\eeq
and $T_{k+1}/T_k \to 1$.
\end{proof}

Note that
\beq \bea
\text{Rest}^{\text{loc}}_{\vare, \mathcal D}(T) & =  \frac{1}{T} \int_{\vare T}^T \Big(\1\{M_{\text{loc}}(s) \leq D\} - \PP\left[M_{\text{loc}}(s) \leq D  \mid \mathcal F_{R_T} \right]\Big) ds \\
&  \qquad  - \frac{1}{T} \int_{\vare T}^T \Big(\1\{M_{\text{loc}}(s) \leq d\} - \PP\left[M_{\text{loc}}(s) \leq d  \mid \mathcal F_{R_T} \right]\Big) ds \\
& \defi \frac{1}{T} \int_{\vare T}^T X_s^{\{D\}}ds - \frac{1}{T} \int_{\vare T}^T X_s^{\{d\}} ds,
\eea \eeq
with obvious notations. The goal is thus to prove that both integrals satisfy the assumptions of Theorem \ref{lyons_int}. 
We address the first integral, the proof for the second being identical.
By  construction, $\big| X_s^{\{D\}} \big| \leq 2$ a.s. for all $s$, and   
\beq
\E\left[ X_s^{\{D\}} \right] =0\ .
\eeq
It therefore suffices to check the assumption concerning the summability of correlations. Let 
\beq \label{correlators}
\widehat C_T(s, s') \defi \E\left[  X_s^{(D)} \cdot X_{s'}^{(D)}  \right], 
\eeq
Note that by the properties of conditional expectation
\beq \bea
\widehat C_T(s, s') & = \E\Bigg[\Big(\1\{M_{\text{loc}}(s) \leq D\} - \PP\left[M_{\text{loc}}(s) \leq D  \mid \mathcal F_{R_T} \right]\Big) \times \\
& \hspace{2cm} \times \Big(\1\{M_{\text{loc}}(s')\leq D\} - \PP\left[M_{\text{loc}}(s') \leq D  \mid \mathcal F_{R_T} \right]\Big) \Bigg] \\
& = \E\Bigg[\Bigg( \PP\left[M_{\text{loc}}(s)\leq D, M_{\text{loc}}(s') \leq D \mid \mathcal F_{R_T} \right]\\
& \hspace{2cm}- \PP\left[M_{\text{loc}}(s) \leq D  \mid \mathcal F_{R_T} \right] \times \PP\left[M_{\text{loc}}(s') \leq D  \mid \mathcal F_{R_T} \right]\Bigg) \Bigg].
\eea \eeq
We claim that
\beq \bea  \label{summability_lyons_two}
& \sum_T \frac{1}{T} \E\left[\Big| \frac{1}{T} \int_{\vare T}^T X_s^{(D)} ~ds \Big|^2\right]  =  2 \sum_T \frac{1}{T^3} \int_{\vare T}^T ds  \int_{s}^T ds' \widehat C_T(s, s') \text{ is finite.} 
\eea \eeq
In order to see this, and proceeding with the program outlined at the end of Section \ref{conjecture}, we now specify the concept of times {\it  well separated} from each other. 
Choose $0< \xi < 1$ and split the integration according to the distance between $s$ and $s'$:
\beq \bea \label{summability_lyons_three}
&  \frac{1}{T^3}\int_{\vare T}^T ds  \int_{s}^T ds' (\cdot) =  \frac{1}{T^3}\int_{\vare T}^T ds  \int_{s}^{s+T^\xi} ds' (\cdot) +  \frac{1}{T^3} \int_{\vare T}^T ds  \int_{s+T^\xi}^T ds' (\cdot) .
\eea \eeq 
The contribution of the first term on the r.h.s. above is negligible due to the uniform boundedness of the integrand  and to the choice $0<\xi<1$.
We are thus left to prove that the contribution to \eqref{summability_lyons_two} of the second term in \eqref{summability_lyons_three} is finite. 
The following is the key estimate. 

\begin{teor} \label{uniform_bound_lyons}
There exists a finite $T_o$ such that the following holds for $T\geq T_o$:
for some $\epsilon_1, \epsilon_2 > 0$ \emph{not} depending on $T$ (but on the other underlying parameters), the bound
\beq \label{uniform_c_lyons_zero}
\widehat C_T(s, s') \leq  (\ln  T)^{\epsilon_1} e^{-(\ln T)^{\epsilon_2}}
\eeq
holds uniformly for all $s, s'$ such that $\vare T \leq s < s' \leq T$ and $s'-s>T^\xi$. 
\end{teor}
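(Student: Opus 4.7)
My plan is to bound $\widehat C_T(s,s')$ by exploiting the Markov property of BBM at the intermediate time $s$ together with the sharp tail of Lemma \ref{fundamental_tail} and the path localization of Proposition \ref{path_loc_prop}.

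First, since $R_T\leq \vare T \leq s$, the tower property (using that $\1_A$ is $\mathcal F_s$-measurable) gives
\[
 \widehat C_T(s,s') = \E\Big[\1_A\big(\PP[B\mid \mathcal F_s]-\PP[B\mid \mathcal F_{R_T}]\big)\Big],
\]
so $|\widehat C_T(s,s')|\leq \E\big|\PP[B\mid \mathcal F_s]-\PP[B\mid \mathcal F_{R_T}]\big|$. The task reduces to showing that, in expectation, the conditional probability of $B$ moves little between times $R_T$ and $s$.

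Second, applying the Markov property at each of the two times and invoking Lemma \ref{fundamental_tail} with $r=r_T$ (valid because both $s'-s\geq T^{\xi}$ and $s'-R_T\geq \vare T$ are much larger than $r_T$), and following the chain of estimates developed in the proof of Theorem \ref{convergence_to_der_mart}, one obtains on the high-probability event of path localization
\[
 \PP[B\mid \mathcal F_s] = \exp(-\kappa_1 Z(s)) + \mathcal R_s, \qquad \PP[B\mid \mathcal F_{R_T}] = \exp(-\kappa_2 Z(R_T)) + \mathcal R_{R_T},
\]
where $\kappa_1,\kappa_2$ are multiples of $Ce^{-\sqrt{2}D}$ that track the deterministic shift in \eqref{splitting_m} and the Bramson constant $\gamma(r_T)$, and the remainders $\mathcal R_\cdot$ collect (i) the localization error $e^{-r_T^\delta}=T^{-20}$ from \eqref{tube}, (ii) $\gamma(r_T)-1$, and (iii) vanishing martingale remainders of the type $Y(\cdot)$ and $Z^{(2)}(\cdot)$ already controlled in the proof of Theorem \ref{convergence_to_der_mart}.

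Using $|e^{-a}-e^{-b}|\leq |a-b|$ reduces the problem to bounding $\E|\kappa_1 Z(s) - \kappa_2 Z(R_T)|$ on a good event, plus the small remainders. \emph{This is the main obstacle:} because the derivative martingale is not uniformly integrable (its limit $Z$ has infinite mean), a direct $L^1$ comparison of $Z(s)$ and $Z(R_T)$ is unavailable. The plan is to truncate onto the good event $\{Z(R_T)\leq (\ln T)^a\}\cap\{Y(R_T)\leq (\ln T)^{-b}\}$, whose complement has polylogarithmically small probability via standard concentration for $Y$ and $Z$. On the good event, conditioning further on $\mathcal F_{R_T}$ makes the evolution between $R_T$ and $s$ a sum of $n(R_T)$ conditionally independent sub-BBMs, and a conditional second-moment estimate, restricted to the entropic tube of Proposition \ref{path_loc_prop}, should yield
\[
 \E\big[(Z(s)-Z(R_T))^2 \mid \mathcal F_{R_T}\big] \leq (\ln T)^{\epsilon_1'}\,e^{-(\ln T)^{\epsilon_2}},
\]
from which the claimed bound follows by Chebyshev. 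The choice $r_T=(20\ln T)^{1/\delta}$ makes all localization and Bramson losses polynomially small in $T$, comfortably absorbable into the target rate.
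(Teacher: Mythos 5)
Your reduction $\widehat C_T(s,s')=\E\big[\1\{M_{\text{loc}}(s)\leq D\}\big(\PP[B\mid \mathcal F_s]-\PP[B\mid \mathcal F_{R_T}]\big)\big]$ is a correct identity, but the two steps you build on it fail in the regime the theorem actually covers. First, the representation $\PP[B\mid\mathcal F_s]=\exp(-\kappa_1 Z(s))+\mathcal R_s$ with $\kappa_1$ a constant multiple of $Ce^{-\sqrt 2 D}$ is not available here: the expansion \eqref{splitting_m} requires the conditioning time to be $o$ of the horizon, whereas now $s\asymp s'\asymp T$ and $s'-s$ may be as small as $T^\xi$, so the recentering correction $\frac{3}{2\sqrt 2}\ln\frac{s'-s}{s'}$ is of order $\ln T$, not $o(1)$; moreover Lemma \ref{fundamental_tail} needs thresholds $o(\sqrt{s'-s})=o(T^{\xi/2})$, which is violated by the particles at time $s$ (at depth up to order $\sqrt{s}\asymp\sqrt T$) that carry the bulk of $Z(s)$. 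The functional that appears when you condition at time $s$ is a strongly tilted, truncated sum, not $Z(s)$, and the Theorem \ref{convergence_to_der_mart} computation does not transfer. Second, and more seriously, the asserted conditional second-moment bound $\E[(Z(s)-Z(R_T))^2\mid\mathcal F_{R_T}]\leq(\ln T)^{\epsilon_1'}e^{-(\ln T)^{\epsilon_2}}$ is false: given $\mathcal F_{R_T}$, the increment is a sum of terms $e^{-\sqrt2 y_k(R_T)}$ times increments of additive/derivative martingales of length $s-R_T\asymp T$, whose second moments grow exponentially in $s-R_T$ (e.g.\ $\E\big[\sum_k y_k(u)^2e^{-2\sqrt2 y_k(u)}\big]=e^{u}(2u^2+u)$), and the localization weights $e^{-2\sqrt2 y_k(R_T)}\geq e^{-2\sqrt2 R_T^\beta}$ cannot offset an $e^{\Theta(T)}$ factor; no Chebyshev argument applies without a barrier-type truncation of the sub-BBMs, which is a substantial argument you have not supplied. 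Finally, even if the main term were controlled, your truncation event $\{Z(R_T)\leq(\ln T)^a\}\cap\{Y(R_T)\leq(\ln T)^{-b}\}$ has complement probability only polynomially small in $1/\ln T$ (the tails of $Z$ are of order $1/a$, and $Y(R_T)\asymp Z/\sqrt{R_T}$), so this route can at best give a bound that is a power of $(\ln T)^{-1}$, not the claimed $(\ln T)^{\epsilon_1}e^{-(\ln T)^{\epsilon_2}}$.

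The paper proceeds quite differently, and the difference is exactly where the separation $s'-s>T^\xi$ is exploited quantitatively. Writing the two conditional probabilities as products over the localized particles at time $R_T$, it bounds the integrand by $\widehat Z(I_T,J_T;R_T)+\mathcal R_T$ (Proposition \ref{a_priori_ref}), i.e.\ by the sum over ancestors $k$ of the probability that the \emph{same} ancestor has localized extremal descendants at both times $I$ and $J$. The tube constraints force the most recent common ancestor of such a pair to branch before $I_T-r_T$, and the resulting two-time pair probability is estimated through a generalization of Sawyer's two-particle moment formula combined with Brownian-bridge estimates (Lemmas \ref{basta} and \ref{bridge_cpam}), yielding the uniform bound of Proposition \ref{last_unif} and hence \eqref{uniform_c_lyons_zero}. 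If you want to salvage your decomposition, you would still have to prove a quantitative decorrelation of $\PP[B\mid\mathcal F_s]$ from $\mathcal F_s\setminus\mathcal F_{R_T}$, and the natural way to do that is precisely the pair-counting argument above; as written, your proposal has a genuine gap.
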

The estimate directly implies the desired summability of the second term in \eqref{summability_lyons_three}. 
This concludes the proof Theorem \ref{slln}. The proof of the estimate is somewhat lengthy and done in the next section.

\section{Uniform bounds for the correlations.} \label{technicalities}
We use here $I$ and $J$ to denote the two times $s, s'$ from the statement of Theorem \ref{uniform_bound_lyons}. 

$\widehat C_T(I,J)$ is the expectation of the random variable
\beq \bea \label{start}
& \hat c_T(I, J)\defi \PP\left[M_{\text{loc}}(I)\leq D, M_{\text{loc}}(J) \leq D \mid \mathcal F_{R_T} \right]\\
& \hspace{4cm}- \PP\left[M_{\text{loc}}(I) \leq D  \mid \mathcal F_{R_T} \right] \times \PP\left[M_{\text{loc}}(J) \leq D  \mid \mathcal F_{R_T} \right].
\eea
\eeq
We rewrite these conditional probabilities using the Markov property of BBM, considering independent BBM's starting at their respective position at time $R_T$
and shifting the time by $R_T$. This requires some additional notation.
Take
$$
I_T \defi I-R_T, J_T \defi J-R_T \ ,
$$
and note that $m(I) = m(I_T)+ \sqrt{2}I_T +o(1)$ as $T\uparrow \infty$.  
We consider the collection $\{y_k(R_T)\defi \sqrt{2}R_T-x_k(R_T)\}_{k\leq n(R_T)}$ where the $\{x_k(R_T)\}$ are the position of the particles of the original BBM at time $R_T$.  
Let $\{\tilde x_l(J_T), l\leq n(J_T)\}$ be a BBM starting at zero, of length $J_T$, and  of law $\tilde \PP$ independent of $\PP$.
We write $\tilde M_{\text{loc}}(J_T)$ for the maximum shifted by $m(J_T)$ of this collection, restricted to $l$'s satisfying 
\beq \bea \label{cond_path_after_R_T}
&y_k(R_T)+\frac{s'}{J}m(J)-f_{\beta, J}(R_T+s') \leq  \tilde x_l(s') \leq y_k(R_T)+\frac{s'}{J}m(J)-f_{\alpha, J}(R_T+s') \ ,
\eea \eeq
for $0\leq s' \leq J_T-r_T$ (the "shifted" $J$-tube). 
Similarly, $\tilde M_{\text{loc}}(I_T)$ is the maximum shifted by $m(I_T)$ of the positions of the particles at time $I_T$ 
with the localization condition 
\beq \bea \label{cond_path_after_R_T_two}
&y_k(R_T)+\frac{s'}{I}m(I)-f_{\beta, I}(R_T+s') \leq  \tilde x_l(s') \leq y_k(R_T)+\frac{s'}{J}m(J)-f_{\alpha, J}(R_T+s'),
\eea \eeq
for $0\leq s' \leq I_T-r_T$ (the "shifted" $I$-tube). 
Note that the localization depends on $k$ (in fact on $y_k(R_T)$). We drop this dependence in the notation $\tilde M_{\text{loc}}$ for simplicity.

By the Markov property, the first conditional probability in $\hat c_T(I, J)$ can be written in terms of the shifted process just defined:
\beq \bea \label{c_one}
& \PP\left[ M_{\text{loc}}(I)\leq D, M_{\text{loc}}(J) \leq D \mid \mathcal F_{R_T} \right]\\
& = \prod^\star_{k\leq n(R_T)} \tilde \PP \left[\tilde M_{\text{loc}}(I_T) \leq   D +y_k(R_T), \tilde M_{\text{loc}}(J_T) \leq   D+y_k(R_T) \right],
\eea \eeq
where the product runs over all the particles $k$'s at time $R_T$ whose path is localized in the intersection of the $I-$ and $J-$tubes during the interval $(r_T, R_T)$. 
The restriction to localized positions at time $R_T$ is weaker and sufficient for our purpose:
\beq \label{loc at time R_T}
k=1\dots n(R_T)\, \text{such that}\; y_k(R_T) \in \left(R_T^\alpha + \Omega_T, R_T^\beta+\Omega_T\right) \hspace{1cm} (\bigtriangleup).
\eeq  
(Here and henceforth, we will use $\Omega_T$ to denote a negligible term, which is not necessarily the 
same at different occurences. In the above case it holds $\Omega_T = O(\ln \ln T)$ by definition of the tubes). 
We thus get that \eqref{c_one} is at most
\beq \bea \label{c_four}
& \prod_\bigtriangleup \tilde \PP \left[\tilde M_{\text{loc}}(I_T) \leq   D +y_k(R_T), \tilde M_{\text{loc}}(J_T) \leq   D+y_k(R_T) \right]\ .
\eea\eeq

Let 
\beq \label{p}
\wp(I_T; y_k(R_T))  \defi \tilde \PP \left[\tilde M_{\text{loc}}(I_T)>  D +y_k(R_T) \right],
\eeq
(analogously for $J_T$) and
\beq \label{p_due}
\wp(I_T, J_T; y_k(R_T) ) \defi \tilde \PP\left[\tilde M_{\text{loc}}(I_N)>  D +y_k(R_T)\; \text{and}\; \tilde M_{\text{loc}}(J_T) >  D+y_k(R_T) \right].
\eeq
Finally, define 
\begin{eqnarray}
 \widehat Z(\cdot; R_T) & \defi &\sum_\bigtriangleup \wp(\cdot; y_k(R_T)),\\
\mathcal R_T & \defi& \frac{1}{2} \sum_\bigtriangleup \Big\{\wp(I_T; y_k(R_T))+\wp(J_T; y_k(R_T)) - \wp(I_T,J_T; y_k(R_T)\Big\}^2. \label{rest_term_quadratic}
\end{eqnarray}

\begin{prop} \label{a_priori_ref}
With the above definitions, 
\beq \label{upper_bastaa}
0\leq \hat c_{T}(I,J) \leq  \widehat Z(I_T, J_T; R_T ) + \mathcal R_T,
\eeq
almost surely, for $T$ large enough.
\end{prop}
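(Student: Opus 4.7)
My plan is to decompose each of the three conditional probabilities appearing in $\hat c_T(I,J)$ via the Markov property at time $R_T$, recast the three factorizations over the common index set $\bigtriangleup$, and then expand the resulting difference of products. For the nonnegativity $0\le \hat c_T(I,J)$ I would use positive correlation: conditional on $\mathcal F_{R_T}$ the BBM splits into independent subtrees rooted at the particles at time $R_T$, and within each subtree both events $\{\tilde M_{\text{loc}}(I_T)\le D+y_k(R_T)\}$ and $\{\tilde M_{\text{loc}}(J_T)\le D+y_k(R_T)\}$ are decreasing functions of the Gaussian noises driving that subtree. Hence by the FKG inequality they are positively correlated within each subtree, and multiplying the subtree-level inequalities across the (conditionally) independent subtrees yields $\PP[A\cap B\mid \mathcal F_{R_T}]\ge \PP[A\mid \mathcal F_{R_T}]\cdot \PP[B\mid \mathcal F_{R_T}]$ almost surely, which is exactly $\hat c_T(I,J)\ge 0$.

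For the upper bound, the Markov property at $R_T$ gives exact factorizations of all three conditional probabilities as products over the set $\mathcal P$ of particles whose paths on $(r_T,R_T)$ are localized in the appropriate tube. Since $\mathcal P\subseteq \bigtriangleup$, and since by Proposition \ref{path_loc_prop} combined with the choice $R_T=40\,r_T$ and \eqref{choice_r_T} the contribution of $\bigtriangleup\setminus \mathcal P$ produces an error of size at most $e^{-r_T^\delta}$ (super-polynomially small in $T$), one rewrites the three factorizations over the common index set $\bigtriangleup$ up to a negligible additive remainder absorbed below. Writing $a_k=\wp(I_T;y_k(R_T))$, $b_k=\wp(J_T;y_k(R_T))$, $c_k=\wp(I_T,J_T;y_k(R_T))$ and $p_k=a_k+b_k-c_k$, one is reduced to estimating $\prod_\bigtriangleup(1-p_k)-\prod_\bigtriangleup(1-a_k)(1-b_k)$. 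Using the algebraic identity $(1-p_k)=(1-a_k)(1-b_k)+(c_k-a_k b_k)$ together with the telescoping formula $\prod_k x_k-\prod_k y_k=\sum_k (x_k-y_k)\prod_{j<k} x_j\prod_{j>k} y_j$, this difference becomes $\sum_\bigtriangleup (c_k-a_k b_k)\,\prod_{j<k}(1-p_j)\prod_{j>k}(1-a_j)(1-b_j)$, and bounding every factor in $[0,1]$ by $1$ delivers the leading contribution $\sum_\bigtriangleup c_k=\widehat Z(I_T,J_T;R_T)$.

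The quadratic remainder $\mathcal R_T=\frac{1}{2}\sum_\bigtriangleup p_k^2$ is meant to absorb the higher-order corrections that this crude bound overlooks. These come from two sources: the second-order Taylor correction when sharpening $1-p_k$ against $e^{-p_k}$ and $(1-a_k)(1-b_k)$ against $e^{-(a_k+b_k)}$ via the inequalities $\log(1-x)\le -x-x^2/2$ and $\log(1-x)\ge -x-x^2$ valid for $x\in[0,1/2]$, and the quadratic cross-terms $a_k b_k,\, a_k c_k,\, b_k c_k$ that appear when expanding $p_k^2=(a_k+b_k-c_k)^2$. The main obstacle is precisely this bookkeeping: one must verify that the net quadratic contribution, once the path-localization residual and the omitted cross-terms in the telescoping expansion are folded in, is dominated by $\frac{1}{2}\sum_\bigtriangleup p_k^2$ with the stated constant rather than a larger multiple. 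The hypothesis $y_k(R_T)\ge R_T^\alpha+\Omega_T$ on $\bigtriangleup$, combined with Bramson's tail estimate (Lemma \ref{easy_max_bramson}), guarantees that $a_k,b_k,c_k\to 0$ uniformly on $\bigtriangleup$ as $T\to\infty$, so that the Taylor expansions converge and all terms beyond the quadratic order are negligible.
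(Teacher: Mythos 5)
Your mechanism for the upper bound is genuinely different from the paper's, and in fact shorter. The paper exponentiates the factorization \eqref{log_P}, applies $-a\leq\ln(1-a)\leq -a+a^2/2$ to reach \eqref{c_five}, and then still needs $e^a-1\leq a e^a$, the comparison \eqref{one_half} $\widehat Z(I_T,J_T;R_T)\leq\min\{\widehat Z(I_T;R_T),\widehat Z(J_T;R_T)\}$, and the sign argument \eqref{getting_rid} (resting on the uniform smallness \eqref{bram_simple_bound}) to kill the exponential prefactor; this is exactly where $\mathcal R_T$ is needed. Your telescoping identity $\prod_k x_k-\prod_k y_k=\sum_k(x_k-y_k)\prod_{j<k}x_j\prod_{j>k}y_j$ with $x_k=1-a_k-b_k+c_k$, $y_k=(1-a_k)(1-b_k)$ bypasses all of this: each summand is at most $(c_k-a_kb_k)^{+}\leq c_k$, so the difference of products is at most $\sum_\bigtriangleup c_k=\widehat Z(I_T,J_T;R_T)$, and since $\mathcal R_T\geq 0$ the bound \eqref{upper_bastaa} follows with room to spare, with no Taylor expansion and no quadratic bookkeeping at all. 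That is a legitimate simplification of the paper's argument.

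However, as written your proposal has three gaps. First, the passage to the common index set $\bigtriangleup$ cannot be justified by Proposition \ref{path_loc_prop} ``up to an additive error $e^{-r_T^\delta}$'': that proposition bounds an \emph{unconditional} probability of delocalization, whereas here you need an almost sure statement about the conditional probabilities given $\mathcal F_{R_T}$, and the inequality \eqref{upper_bastaa} is exact, with no slot for an extra additive remainder. The repair is easier than what you attempt: the two marginals factor over the particles whose paths on $(r_T,R_T)$ lie in the $I$- (resp.\ $J$-) tube, and the joint probability factors over the union with mixed factors; the factors attached to particles outside the intersection are common to both terms and lie in $[0,1]$, so they factor out in front of the bracketed difference, and enlarging the index set of the nonnegative sum $\sum_k c_k$ from the intersection to $\bigtriangleup$ only increases $\widehat Z(I_T,J_T;R_T)$ — no error term, no appeal to Proposition \ref{path_loc_prop}. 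Second, your closing paragraph leaves the proof unfinished: the ``bookkeeping'' you flag as the main obstacle (comparing $1-p_k$ with $e^{-p_k}$, cross terms in $p_k^2$, whether the constant $1/2$ suffices) belongs to the paper's exponential route, not to yours; in the telescoping argument there is nothing left for $\mathcal R_T$ to absorb, so by importing that machinery and not resolving it you stop short of the conclusion. Third, the FKG justification of $0\leq\hat c_T(I,J)$ does not stand: because the localization is two-sided, $\{\tilde M_{\text{loc}}(I_T)\leq D+y_k(R_T)\}$ is not a decreasing event in the driving noise (lowering paths can move a high particle from above the entropic envelope into the tube, creating a localized particle above the level), so the monotonicity hypothesis fails. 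Note that the paper itself neither proves nor uses the lower bound — only the upper bound enters the summability in \eqref{summability_lyons_two} — so this last point is peripheral, but your argument for it is not valid as stated.
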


\begin{proof}
In the notation introduced above, one has
\beq \bea 
\label{log_P}
& \prod_\bigtriangleup \tilde \PP \left[\tilde M_{\text{loc}}(I_T) \leq   D +y_k(R_T), \tilde M_{\text{loc}}(J_T) \leq   D+y_k(R_T) \right]\\
& \qquad = \exp\left\{\sum_\bigtriangleup \ln\Big[ 1-\wp(I_T; y_k(R_T))- \wp(J_T; y_k(R_T)) + \wp(I_T; J_T; y_k(R_T))  \Big] \right\}\ .
\eea\eeq
Note that for all $k \in \bigtriangleup$
\beq \label{bram_simple_bound}
\wp(I_T; y_k(R_T)) \leq \tilde \PP\left[ \tilde M(I_T) \geq D +y_k(R_T) \right] \leq \gamma(1+y_k(R_T)+D)^2 e^{-\sqrt{2}(y_k(R_T)+D)} \ .
\eeq
The first inequality holds by dropping the localization condition. Therefore, this
can be made arbitrarily small  (uniformly in $k$) by choosing $T$ large enough. 
The same obviously holds for 
$\wp(J_T; y_k(R_T))$ and $\wp(I_T, J_T; y_k(R_T))$. 
Choose $T$ large enough so that
\beq 
\sup_\bigtriangleup \max\{\wp(I_T; y_k(R_T)), \wp(J_T; y_k(R_T)), \wp(I_T, J_T; y_k(R_T))\} \leq {1/6}.
\eeq
Coming back to \eqref{log_P}, using that
\beq \label{log-approx}
-a\leq \ln(1-a) \leq -a +a^2/2 \quad (0\leq a\leq 1/2),
\eeq
(with $a\defi \wp(I_T; y_k(R_T))+\wp(J_T; y_k(R_T)) - \wp(I_T; J_T; y_k(R_T))$, for $k\in \bigtriangleup$), we  get that \eqref{log_P} is at most 
\beq \label{c_five} \bea
& \exp \Bigg( - \widehat Z(I_T; R_T) - \widehat Z(J_T; R_T) + \widehat Z(I_T,J_T; R_T ) + \mathcal R_T \Bigg).
\eea \eeq
This is an upper bound for the first conditional probability in the definition of $\hat c_{T}(I,J)$.
A similar reasoning, using this time the first inequality in \eqref{log-approx}, yields a lower bound for the second term in $\hat c_{T}(I,J)$, i,e, the product of the conditional probabilities. 
The upshot is:
\beq \label{c_seven}
\hat c_T(I,J) \leq e^{-  \widehat Z(I_T; R_T)-  \widehat Z(J_T; R_T) }\left\{e^{  \widehat Z(I_T, J_T; R_T ) +\mathcal R_T} -1 \right\},
\eeq
almost surely and for large enough $T$. We now use that $e^a-1\leq a\cdot e^a$ (which holds for $a>0$) for the term in the brackets to get that \eqref{c_seven} is at most
\beq \bea \label{c_eight}
& e^{-  \widehat Z(I_T; R_T)-\widehat Z(J_T; R_T )} \left( \widehat Z(I_T, J_T; R_T) +\mathcal R_T \right) \cdot e^{  \widehat Z(I_T, J_T; R_T ) +\mathcal R_T}. \\
\eea \eeq
By construction, $\widehat Z(I_T, J_T; R_T)\leq \min\left\{\widehat Z(I_T; R_T); \widehat Z(I_T; R_T)\right\}$, implying that
\beq \label{one_half}
\widehat Z(I_T, J_T; R_T) -\frac{1}{2} \widehat Z(I_T; R_T) -\frac{1}{2}\widehat Z(J_T; R_T) \leq 0,
\eeq
and therefore \eqref{c_eight} is at most 
\beq \label{c_nine} \bea
\left( \widehat Z(I_T, J_T; R_T) +\mathcal R_T \right) \cdot e^{\mathcal R_T- \frac{1}{2} \widehat Z(I_T; R_T) -\frac{1}{2}\widehat Z(J_T; R_T)}.
\eea \eeq
This is not far from the claim of Proposition \ref{a_priori_ref}. It remains to get rid of the exponential 
on the r.h.s. above. 
Using the bound \eqref{rest_term_quad}, together with the definition of the $\widehat Z$ and rearranging, we 
arrive at
\beq \bea \label{getting_rid}
& \mathcal R_T -  \frac{1}{2} \widehat Z(I_T; R_T) -\frac{1}{2}\widehat Z(J_T; R_T) \\
& \quad \leq  \sum_\bigtriangleup \wp(I_T; y_k(R_T))\left(3 \wp(I_T; y_k(R_T)) -\frac{1}{2}\right) \\
& \hspace{4cm} + \sum_\bigtriangleup \wp(J_T; y_k(R_T))\left(3 \wp(J_T; y_k(R_T)) -\frac{1}{2}\right).
\eea \eeq
In view of \eqref{bram_simple_bound}, we may find $T$ large enough such that the following holds uniformly
for all $k\in \bigtriangleup$:
\beq
 3 \wp(I_T; y_k(R_T)) -\frac{1}{2} \leq 0, \qquad 3 \wp(J_T; y_k(R_T)) -\frac{1}{2} \leq 0, 
\eeq
in which case  {\it all} terms appearing in \eqref{getting_rid} become negative, and this implies that
 \beq \label{c_ten}
\hat c_{T}(I,J) \leq  \widehat Z(I_T, J_T; R_T ) + \mathcal R_T,
\eeq
concluding the proof of Proposition \ref{a_priori_ref}.
\end{proof}

\subsection{Proof of Theorem \ref{uniform_bound_lyons}}
We first observe that the expectation of $\mathcal R_T$ appearing in Proposition \ref{a_priori_ref} gives the right bound in Theorem \ref{uniform_bound_lyons}.
Indeed, using that $(a+b+c)^2\leq 4a^2+4b^2+4c^2$, we get the upper bound  
\beq \bea \label{rest_term_quadratic_three}
\mathcal R_T & = \frac{1}{2} \sum_\bigtriangleup \Big\{\wp(I_T; y_k(R_T))+\wp(J_T; y_k(R_T)) - \wp(I_T,J_T; y_k(R_T)\Big\}^2 \\
& \leq 2 \sum_\bigtriangleup \wp(I_T; y_k(R_T))^2+\wp(J_T; y_k(R_T))^2 + \wp(I_T, J_T; y_k(R_T))^2.
\eea \eeq
Moreover,  
\beq\bea
\wp(I_T, J_T; y_k(R_T))
& \leq \frac{1}{2} \wp(I_T; y_k(R_T))+ \frac{1}{2} \wp(J_T; y_k(R_T)).
\eea \eeq
Inserting this in \eqref{rest_term_quadratic_three}, we get
\beq \bea \label{rest_term_quad}
\mathcal R_T \leq  \sum_\bigtriangleup\left\{ 3 \wp(I_T; y_k(R_T))^2+3 \wp(J_T; y_k(R_T))^2\right\}.
\eea \eeq

By \eqref{rest_term_quad}, \eqref{bram_simple_bound} and \eqref{loc at time R_T}, and for some irrelevant numerical constants $\kappa$,
\beq \bea \label{getting_rid_r}
\E\left[ \mathcal R_T\right] & \leq \kappa \E\left[ \sum_\bigtriangleup  y_k(R_T)^2 e^{-2 \sqrt{2} y_k(R_T)} \right]\\
& \leq \kappa e^{R_T} \int_{R_T^\alpha+\Omega_T}^{R_T^\beta+\Omega_T} y^2 e^{-2\sqrt{2} y} e^{-\frac{(y-\sqrt{2}R_T)^2}{2R_T}} \frac{dy}{\sqrt{2\pi R_T}}  \\
& \leq \kappa R_T^2 e^{-\sqrt{2} R_T^\alpha} = \kappa (\ln T)^{2/\delta} e^{-\kappa (\ln T)^{\alpha/\delta}},
\eea \eeq
which is at most 
\beq \label{bounding_stupid_one}
 \E\left[\mathcal R_T \right] \leq (\ln T)^{\epsilon^{(1)}} e^{-(\ln T)^{\epsilon^{(2)}}}, 
\eeq
for some $\epsilon^{(1)}, \epsilon^{(2)} >0$. 
It will remain to prove that  $\E[\widehat Z(I_T, J_T; R_T )]$ behaves similarly to establish Theorem \ref{uniform_bound_lyons}.

Recall that
\beq
\widehat Z(I_T, J_T; R_T ) =  \sum_\bigtriangleup \wp(I_T, J_T; y_k(R_T) ),
\eeq
and
\beq \label{what p}
\wp(I_T, J_T;y_k(R_T) ) =  \tilde\PP\left[\tilde M_{\text{loc}}(I_T)>  D +y_k(R_T)\; \text{and}\; \tilde M_{\text{loc}}(J_T) >  D+y_k(R_T)\right]. 
\eeq
By definition, \eqref{what p} is the probability to find a particle of the BBM which has two 
extremal descendants, particle $\boldsymbol{(1)}$ say, whose position is above $m(I_T)+D+y_k(R_T)$ at time $I_T$, and particle $\boldsymbol{(2)}$, which lies above $m(J_T)+D+y_k(R_T)$ at time $J_T$.
These two particles also satisfy localization conditions on their paths. 
In other words, this is the probability that the same
ancestor $k$, with (relative) position $y_k(R_T)$, produces children $\boldsymbol{(1)}$ and $\boldsymbol{(2)}$ which are extremal at time $I$ and $J$. 
As these generations are {\it well separated} in time, that is $J-I> T^\xi$ (and thus also $J_T-I_T> T^\xi$), we may expect this probability to be very small. 

In order to see that this is indeed the case, split the probabilities according to whether the {\it most recent} common ancestor of particles $\boldsymbol{(1)}$ and $\boldsymbol{(2)}$ has branched {\it before} time $I_T-r_T$
(with $r_T$ as in \eqref{choice_r_T}), or {\it after}. We write this as
\[ \bea
\wp(I_T, J_T; y_k(R_T) ) &=   \wp(I_T, J_T; y_k(R_T) ; \text{split before}\, I_T-r_T) \\
& \hspace{2cm} + \wp(I_T, J_T; y_k(R_T) ; \text{split after}\, I_T-r_T).
\eea \]
(Figure \ref{tube_fig} illustrates the first case).
\begin{figure}
\includegraphics[scale=0.4]{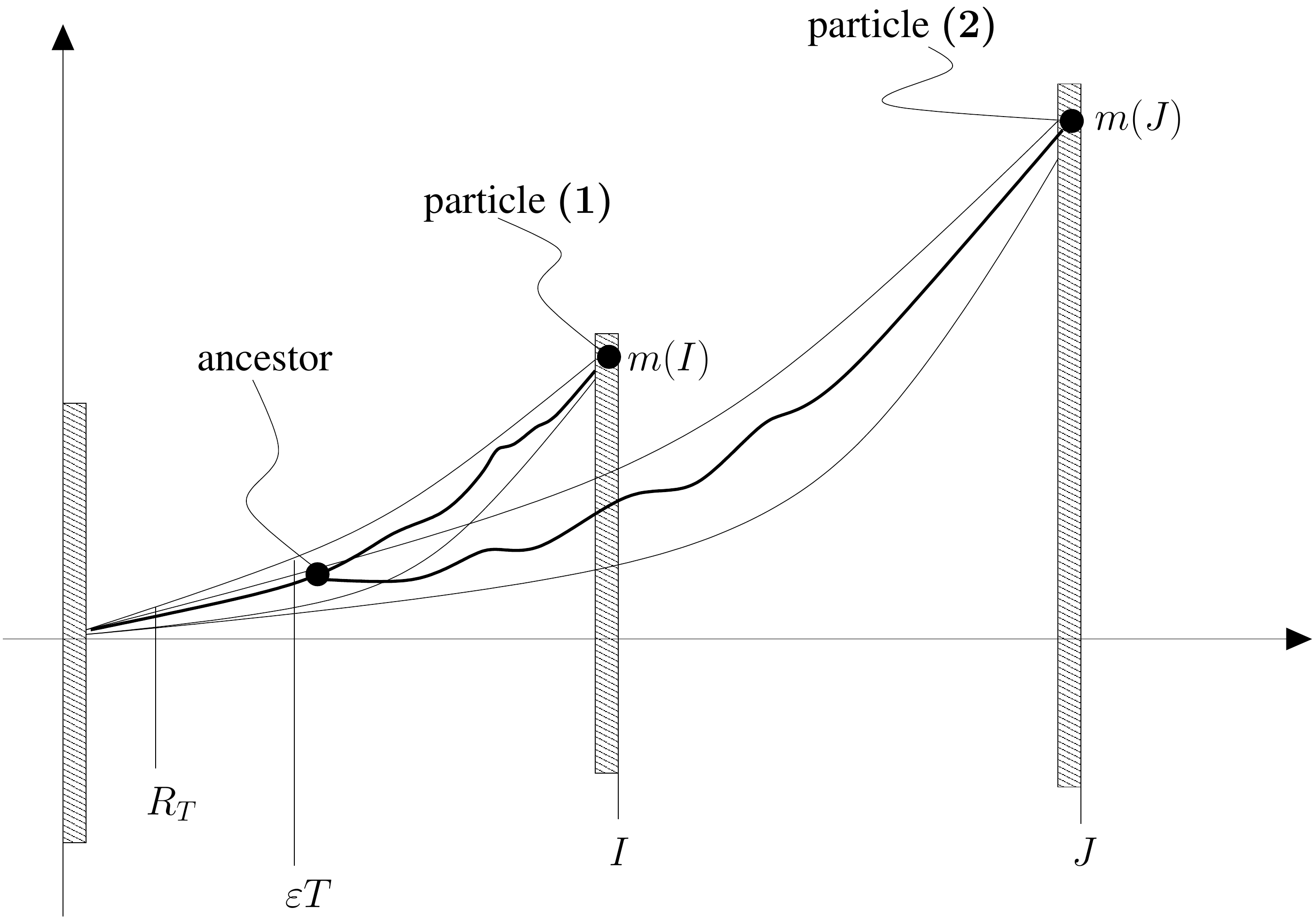}
\caption{Time of branching before $I$}
\label{tube_fig}
\end{figure}
The second probability is in fact zero. Indeed, the condition \eqref{cond_path_after_R_T} implies that the ancestor of $\boldsymbol{(2)}$ at time $I-r_T$ lies
at heights which are at most the level of the entropic envelope associated with $J$.  Since $J-I> T^\xi$ and this is easily seen to be way lower than the {\it lower} envelope of particle $\boldsymbol{(1)}$ associated with time $I$. In other words, the localization tubes of particles $\boldsymbol{(1)}$ and $\boldsymbol{(2)}$ are disjoint if their ancestor split after $I - r_T$.
Hence, the splitting of the ancestor of particles $\boldsymbol{(1)}$ and $\boldsymbol{(2)}$ can
only happen before time $I_T-r_T$:
\beq \label{zee}
\widehat Z(I_T, J_T; R_T ) =  \sum_\bigtriangleup \wp(I_T, J_T; y_k(R_T);  \text{split before}\, I_T-r_T) \text{ a.s.}
\eeq

\begin{prop} \label{last_unif} 
For some $\epsilon^{(3)}, \epsilon^{(4)}>0$ and $T$ large enough the following bounds hold
\beq \bea \label{really_almost}
& \wp(I_T, J_T; y_k(R_T);  \text{split before}\, I_T-r_T)\\
& \qquad \leq (\ln T)^{\epsilon^{(3)}} e^{-(\ln T)^{\epsilon^{(4)}}} y_k(R_T) e^{-\sqrt{2}(y_k(R_T)},
\eea \eeq
uniformly for all $k \in \bigtriangleup$ and $I_T, J_T$ as considered, almost surely. 
\end{prop}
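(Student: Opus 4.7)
The plan is to decompose the probability by conditioning on the time $\tau \in (0, I_T - r_T)$ and the position of the \emph{most recent common ancestor} (MRCA) of the two extremal descendants $\boldsymbol{(1)}$ and $\boldsymbol{(2)}$; write $z = \sqrt{2}\tau - \tilde x(\tau)$ for its relative height in the shifted $y$-coordinates. Before $\tau$ the two particles share a single path; after $\tau$, by the branching/Markov property, they evolve as two \emph{independent} BBMs, so that the contribution of the MRCA at $(\tau, z)$ factorises over the two post-split evolutions. Many-to-one bounds the expected density of such ancestors at time $\tau$ by
\beq
\frac{\kappa}{\sqrt{2\pi\tau}}\,\exp\!\Big(\sqrt{2}\,z - \frac{z^2}{2\tau}\Big),
\eeq
while the combinatorial factor from the branching event is controlled by $\sum_k k(k-1)p_k<\infty$.

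For each of the two independent offspring BBMs I would apply Lemma~\ref{easy_max_bramson}: using $m(I_T) - m(I_T - \tau) = \sqrt{2}\tau + O(\ln I_T)$, reaching the extremal level $m(I_T) + D + y_k(R_T)$ at time $I_T$ corresponds to the BBM of length $I_T - \tau$ overshooting its own front by $q_I := D + y_k(R_T) + z + O(\ln I_T)$, with probability at most $\kappa\,(1+q_I)^2\,e^{-\sqrt{2}q_I}$ in the regime where the hypothesis of the lemma is satisfied; the same bound applies to $\boldsymbol{(2)}$ with $J_T$ in place of $I_T$. Multiplying the three factors and using the telescoping $e^{\sqrt{2}z}\cdot e^{-2\sqrt{2}z} = e^{-\sqrt{2}z}$ in the exponent, one arrives at
\beq\bea
& \wp(I_T, J_T; y_k(R_T);\,\text{split before}\ I_T - r_T)\\
& \qquad \leq \kappa\,e^{-2\sqrt{2}(D+y_k(R_T))}\!\int_0^{I_T-r_T}\!\!\frac{d\tau}{\sqrt{\tau}}\!\int_{z_{\min}(\tau)}^{\infty}\! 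P\bigl(z,y_k(R_T)\bigr)\,e^{-\sqrt{2}z - z^2/(2\tau)}\,dz,
\eea\eeq
with $P$ polynomial.

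The localization requirement on both descendants during $(r_T, I_T - r_T)$ forces the common pre-split path to lie in the intersection of the shifted $I_T$- and $J_T$-tubes, giving a lower bound $z_{\min}(\tau) \gtrsim f_{\alpha,I_T}(R_T + \tau) - y_k(R_T) + O(\ln I_T)$ for $\tau \in (r_T, I_T - r_T)$, with a symmetric bound past $I_T/2$. The inner integral is Laplace-dominated at $z = z_{\min}(\tau)\vee 0$ and yields $e^{-\sqrt{2}(z_{\min}(\tau)\vee 0)}$ up to polynomial factors; the outer $\tau$-integral concentrates near the tube boundaries $\tau = r_T$ and $\tau = I_T - r_T$, where $z_{\min} \asymp r_T^\alpha \asymp (\ln T)^{\alpha/\delta}$, while the short initial window $\tau \leq r_T$ (of polylogarithmic length) contributes only polynomial factors. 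Dividing through by the target $y_k(R_T)\,e^{-\sqrt{2}y_k(R_T)}$ and using $y_k(R_T) \geq R_T^\alpha \asymp (\ln T)^{\alpha/\delta}$, the residual factor $e^{-\sqrt{2}y_k(R_T)} \leq e^{-c(\ln T)^{\alpha/\delta}}$ absorbs all polynomial blow-ups in $T$ and produces the required stretched-exponential prefactor $(\ln T)^{\epsilon^{(3)}}\,e^{-(\ln T)^{\epsilon^{(4)}}}$.

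The main obstacle is the careful bookkeeping of the numerous logarithmic and polynomial corrections (from $m(\cdot)$, the envelopes $f_{\alpha,t},f_{\beta,t}$, Bramson's polynomial prefactor, and the Gaussian drift) and their interplay with the $z_{\min}(\tau)$ constraint, so that the final exponents $\epsilon^{(3)},\epsilon^{(4)}>0$ depend only on $\alpha,\beta,\delta$ and $\mathcal D$ and remain uniform over $k\in\bigtriangleup$ and over admissible pairs $(I,J)$ with $J-I>T^\xi$. A secondary subtlety is that for $\tau\leq r_T$ the tube argument provides no exponential suppression in $z$, so the entire gain in that regime must come from the $e^{-\sqrt{2}y_k(R_T)}$ factor alone.
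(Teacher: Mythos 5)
Your skeleton -- a first-moment bound over pairs of extremal descendants whose most recent common ancestor branches before $I_T-r_T$, factorizing at the split time -- is exactly the Sawyer-type two-particle computation the paper uses. The genuine gap is in how you treat the post-split contributions: you discard the descendants' path localization entirely and keep only the constraint that the MRCA lies in the (shifted) $I$-tube, bounding each subtree by Lemma \ref{easy_max_bramson}. For split times $\tau$ in the second half of $(0,I_T-r_T)$ this is not enough. By your own formula $z_{\min}(\tau)\gtrsim f_{\alpha,I}(R_T+\tau)-y_k(R_T)$, and near $\tau=I_T-r_T$ one has $f_{\alpha,I}(R_T+\tau)=(I_T-\tau)^\alpha\approx r_T^\alpha$, which is \emph{smaller} than $y_k(R_T)\geq R_T^\alpha=(40\,r_T)^\alpha$; hence $z_{\min}<0$ there, and your claim that the $\tau$-integral concentrates where ``$z_{\min}\asymp r_T^\alpha$'' is wrong -- the $z$-integral gives no gain at all in that window. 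Precisely in this region the recentering terms you absorb into ``$O(\ln I_T)$'' are of order $\ln T$ with the wrong sign: applying Lemma \ref{easy_max_bramson} to the two subtrees produces the factors $\big(I_T/(I_T-\tau)\big)^{3/2}\leq (I_T/r_T)^{3/2}$ and $\big(J_T/(J_T-\tau)\big)^{3/2}\leq \big(J_T/(J_T-I_T)\big)^{3/2}\leq T^{3(1-\xi)/2}$, i.e.\ a genuinely polynomial-in-$T$ blow-up of order $T^{3(2-\xi)/2}$ (up to logs) over a window of polylogarithmic length where the only suppression left is the global $e^{-2\sqrt{2}y_k(R_T)}$. (A secondary issue in the same window: the hypothesis $y\leq t^{1/2}$ of Lemma \ref{easy_max_bramson} fails for large $z$ and for $I_T-\tau$ of order $r_T$, so a cruder cap is needed as well.)

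Your proposed rescue -- that after paying $y_k(R_T)e^{-\sqrt 2 y_k(R_T)}$ the leftover $e^{-\sqrt 2 y_k(R_T)}\leq e^{-c(\ln T)^{\alpha/\delta}}$ ``absorbs all polynomial blow-ups in $T$'' -- is only valid if $\alpha/\delta>1$, i.e.\ $\delta<\alpha$, and nothing in Proposition \ref{path_loc_prop} (which supplies $\delta$ and, through \eqref{choice_r_T}, the size of $R_T$) guarantees this; you neither notice the constraint nor argue that $\delta$ may be decreased to enforce it. The clean way out, and the route the paper takes, is to \emph{keep} the localization of the second particle after the split (equivalently, to use the $J$-tube, not the $I$-tube, for the common path): since the split occurs before $I_T-r_T$ and $J-I>T^\xi$, the entropic envelope of the $J$-tube at the split time lies at depth at least of order $\min\big((R_T+\tau)^\alpha,\,T^{\xi\alpha}\big)$, which for $\tau\geq I_T/2$ is polynomially large in $T$. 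Retaining this constraint -- the paper does it through the Brownian-bridge-below-a-line estimates of Lemmas \ref{basta} and \ref{bridge_cpam} -- produces a factor of order $e^{-c\,T^{\xi\alpha}}$ exactly in the window where your bound degenerates, killing all polynomial factors with no condition on $\delta$. Without either this ingredient or an explicit (and justified) assumption $\delta<\alpha$, your argument does not deliver the claimed bound $(\ln T)^{\epsilon^{(3)}}e^{-(\ln T)^{\epsilon^{(4)}}}$.
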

The proof of this proposition is technical, and postponed to section \ref{proof_last_unif}.
We show how this provides the last piece for the proof of Theorem \ref{uniform_bound_lyons}. This is straightforward: by similar computations as in \eqref{getting_rid_r},
 \beq
\E\left[\sum_{k \in \bigtriangleup} y_k(R_T) e^{-\sqrt{2}y_k(R_T)}\right] \leq \kappa \cdot \sqrt{R_T} = 
\kappa \ln(T)^{1/2\delta},
\eeq
for large enough $\kappa>0$ and recalling that by definition $R_T = 40 (\ln T)^{1/\delta}$. This, together with \eqref{really_almost}  implies
\beq \bea
\E\left[\sum_{k \in \bigtriangleup} \wp(I_T, J_T; y_k(R_T);  \text{split before}\, I_T-r_T)\right]\leq \kappa (\ln T)^{\epsilon^{(3)}+1/2\delta} e^{-(\ln T)^{\epsilon^{(4)}}}.
\eea \eeq
Combining this with \eqref{bounding_stupid_one} we see that 
the claim of Theorem \ref{uniform_bound_lyons} holds with 
\beq 
\epsilon_1 \defi \max\left\{ \epsilon^{(1)}; \epsilon^{(3)}+1/2\delta\right\},
\eeq 
and
\beq 
\epsilon_2 \defi \min\left\{ \epsilon^{(2)}; \epsilon^{(4)}  \right\}.
\eeq

\hfill $\square$

\subsection{Proof of Proposition \ref{last_unif}} \label{proof_last_unif}
The claim is that 
\beq \bea \label{zee_zee}
& \wp(I_T, J_T; y_k(R_T);  \text{split before}\, I_T-r_T)\\
& \qquad \leq (\ln T)^{\epsilon^{(3)}} e^{-(\ln T)^{\epsilon^{(4)}}} y_k(R_T) e^{-\sqrt{2}y_k(R_T)},
\eea \eeq
holds uniformly for $k\in \bigtriangleup$. 
In order to prove this, we use a formula by Sawyer \cite{sawyer} concerning the {\it expected number of pairs of particles} ancestor branched in the interval $(0,I_T-r_T)$ and
whose paths satisfy certain localization conditions, say $T^{(1)}$ and $T^{(2)}$ respectively.
The expected number of such pairs is given by  
\beq\bea \label{sawyer_general}
& K e^{I_T} \int_0^{I_T-r_T} ds \cdot e^{J_T-s} \int d\mu_s(y) \PP\left[ x \in T^{(1)}_{(0,s)}\cap T^{(2)}_{(0,s)}  \mid x(s) = y\right] \times \\
& \times \PP\left[ x \in T^{(1)}_{(s,I_T)}  \mid x(s) = y\right] \times \PP\left[ x \in T_{(s, J_T)}^{(2)} \mid x(s) = y\right].
\eea\eeq
Here the probability $\PP$ is the law of a Brownian motion $x$, and $K=\sum_j p_j j(j-1)$ (with $\{p_j\}$ the offspring distribution). 
The time $s$ is the branching time of the common ancestor, and $\mu_s$ is the Gaussian measure with variance $s$. 
$T^{(\cdot)}_{(a,b)}$ denotes the condition on the path during the time interval $(a,b)$. 

A proof of this formula is given in \cite[p. 664 and 686]{sawyer}.
Sawyer counts the pairs of particles for the {\it same} time, whereas our case concerns particles for two different times: particle $\boldsymbol{(1)}$ at time $I_T$, and particle $\boldsymbol{(2)}$ at time $J_T$. 
The generalization of Sawyer's formula is however straightforward. 
The reader is referred to the intuitive construction of the formula provided by Bramson \cite[p. 564]{bramson}.

Dropping the condition $T^{(2)}$ in the first probability of \eqref{sawyer_general} yields a simpler bound:
\beq\bea \label{sawyer_general_two}
\eqref{sawyer_general} \leq & K e^{I_T} \int_0^{I_T-r_T} ds \cdot e^{J_T-s} \int d\mu_s(y) \PP\left[ x \in T^{(1)}_{(0,I_T)} \mid x(s) = y\right] \times \\
& \hspace{6cm} \times \PP\left[ x\in T_{(s, J_T)}^{(2)} \mid x(s) = y\right]\ .
\eea\eeq
Note that $\wp(I_T, J_T; y_k(R_T);  \text{split before}\, I_T-r_T)$ is by Markov inequality 
at most the expected number of pairs $\{\boldsymbol{(1), (2)}\}$ of particles which satisfy their 
respective localization conditions with the common ancestor branching before time $I_T-r_T$. 
By \eqref{sawyer_general_two}, it thus holds:
\beq \bea \label{huckleberry}
& \wp(I_T, J_T; y_k(R_T);  \text{split before}\, I_T-r_T) \\
& \qquad \leq K e^{I_T} \int_0^{I_T-r_T} ds \cdot e^{J_T-s} \int d\mu_s(y) \PP\left[ x \in T^{(1)}_{(0,I_T)} \mid x(s) = y\right] \times \\
& \hspace{8cm} \times \PP\left[ x\in T_{(s, J_T)}^{(2)} \mid x(s) = y\right]
\eea \eeq
with $T^{(1)}$ and $T^{(2)}$ being the shifted tubes defined in \eqref{cond_path_after_R_T} and \eqref{cond_path_after_R_T_two}.


The idea is now to bound the second probability appearing in \eqref{huckleberry} uniformly in $y$. 
This procedure has been introduced in Bramson \cite[Lemma 11]{bramson}, and proved useful also in \cite[Theorem 2.1]{abk_genealogies}. 

\begin{lem} \label{basta} It holds:
\beq \bea
& \PP\left[ x\in T_{(s, J_T)}^{(2)} \mid x(s) = y\right] \\
& \qquad \leq \Omega_T^2 e^{-(J_T-s)} \exp\left( -\sqrt{2}f_{\alpha, J}(R_T+s)-\frac{3}{2}\ln\left( \frac{J_T-s}{J_T}\right)-\frac{3}{2} \frac{s}{J_T}\ln J_T \right),
\eea \eeq
where $\Omega_T = O((\ln T)^{1/2\delta})$ as $T\uparrow \infty$.
\end{lem}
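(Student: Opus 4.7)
\textbf{Proof plan for Lemma \ref{basta}.}

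The strategy is the classical Bramson "entropic repulsion" bound for paths constrained below an upper envelope while reaching a high terminal level, adapted here to the shifted $J$-tube. I would first rewrite
\[
\PP\!\left[ x\in T_{(s, J_T)}^{(2)} \mid x(s) = y\right]
=\int_{m(J_T)+D+y_k(R_T)}^{\infty}
p_{J_T-s}(y,z)\,
\PP\!\left[\text{bridge}(s,y)\to(J_T,z) \in T^{(2)}\right]\,dz,
\]
where $p_u(a,b)=(2\pi u)^{-1/2}\exp(-(b-a)^2/2u)$ is the Brownian transition density and the remaining probability is a Brownian-bridge event. The point is that the Gaussian density supplies the bulk of the exponential decay while the bridge probability supplies the polynomial/entropic correction.

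Next I would expand the Gaussian factor. Using $m(J_T)=\sqrt{2}J_T-\tfrac{3}{2\sqrt{2}}\ln J_T$ and the change of variables $z=m(J_T)+D+y_k(R_T)+v$, $v\geq 0$, a Taylor expansion of $(z-y)^2/2(J_T-s)$ around the deterministic trajectory yields the dominant contribution
\[
\exp\!\left(-(J_T-s)\,+\,\sqrt 2\, y\,-\,\tfrac{3}{2}\tfrac{s}{J_T}\ln J_T+\Omega_T\right),
\]
where $\Omega_T=O((\ln T)^{1/2\delta})$ gathers the sub-leading terms coming from the quadratic remainder, the normalization $1/\sqrt{2\pi(J_T-s)}$ (after integration against $v$ this produces the $-\tfrac{3}{2}\ln\tfrac{J_T-s}{J_T}$ factor once combined with the normalization of the bridge), and the shift $y_k(R_T)\in(R_T^\alpha+\Omega_T, R_T^\beta+\Omega_T)$. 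Substituting the entropic-envelope relation $y\leq \tfrac{s}{J}m(J)-f_{\alpha,J}(R_T+s)$ valid on the tube, the factor $e^{\sqrt{2}y}$ becomes $\exp(\sqrt 2 \tfrac{s}{J}m(J)-\sqrt 2 f_{\alpha,J}(R_T+s))=\exp(2s -\tfrac{3s}{2J}\ln J-\sqrt 2 f_{\alpha,J}(R_T+s))$, which after combining with $e^{-(J_T-s)}$ yields the $e^{-(J_T-s)}$ and $e^{-\sqrt{2}f_{\alpha,J}(R_T+s)}$ prefactors appearing in the claim.

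For the bridge probability I would invoke the ballot-type estimate of Bramson \cite[Lemma 11]{bramson} and the authors' refinement in \cite{abk_genealogies}: for a Brownian bridge from $(s,y)$ to $(J_T,z)$ constrained to lie below an almost-linear upper envelope $U(\cdot)$,
\[
\PP[\text{bridge stays below }U]\ \leq\ \frac{\kappa\,(U(s)-y)\,(U(J_T)-z)}{J_T-s}.
\]
Since $U(s)-y\leq f_{\alpha,J}(R_T+s)-f_{\alpha,J}(R_T+s)+\Omega_T=\Omega_T$ by the very definition of the tube (the bound is restricted to $y$ close to the entropic envelope after using \eqref{loc at time R_T}), and similarly at the top endpoint, this produces an $\Omega_T^2$ prefactor absorbing the polynomial loss. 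The integration against $v\geq 0$ of the remaining Gaussian piece $e^{-v^2/2(J_T-s)-\sqrt{2}v-\cdots}$ is finite and is absorbed into the $\Omega_T^2$ as well.

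The main obstacle is the bookkeeping of the logarithmic corrections: one must follow three distinct sources of $\ln J_T$ (from $m(J_T)$ itself, from the normalization $(2\pi(J_T-s))^{-1/2}$, and from expanding $\tfrac{s}{J}m(J)$ with $J=J_T+R_T$) and show that they assemble precisely into $-\tfrac{3}{2}\ln\!\tfrac{J_T-s}{J_T}-\tfrac{3}{2}\tfrac{s}{J_T}\ln J_T$, leaving only harmless $O((\ln T)^{1/\delta})$ residuals to be swept into $\Omega_T^2$. The supremum over $y$ in the shifted tube must also be taken uniformly, which is handled via monotonicity of the Gaussian exponent in $y$ on the relevant range.
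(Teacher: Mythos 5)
Your plan follows essentially the same route as the paper's proof: decompose over the terminal value of the path, expand the Gaussian endpoint density to extract $e^{-(J_T-s)}$, the factor $e^{-\sqrt{2}f_{\alpha,J}(R_T+s)}$ via the tube constraint at the branching time, and the logarithmic corrections, while the path condition is controlled by the ballot-type Brownian-bridge estimate below the (linearly interpolated) entropic envelope, with all subleading terms swept into $\Omega_T^2$. The only differences are presentational: the paper makes explicit the replacement of the curved upper envelope by the chord through its endpoints (so that the linear-barrier bridge lemma applies) and the fact that the constrained interval stops $r_T$ before the terminal time, the $\sqrt{r_T}$ fluctuation allowance there being what produces the stated size $\Omega_T=O((\ln T)^{1/2\delta})$.
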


For the proof of  Lemma \ref{basta} some facts concerning the Brownian bridge are needed. 
 Denoting a standard Brownian motion by $x$, the Brownian bridge of length $t$ starting and ending at zero, is the Gaussian process
\beq 
\mathfrak z_t(s)\defi x(s)-\frac{s}{t} x(t), \qquad 0\leq s\leq t. 
\eeq
The Brownian bridge is a Markov process, and it has the property that $\mathfrak z_t(s), 0\leq s\leq t$
is independent of $x(t)$. This construction generalizes 
to the case where the endpoints of the bridge are $a, b\neq 0$; we denote by $\mathfrak z_t^{(a,b)}(s)$ such a process. The following is also well known: 
\beq \label{bridge_endpoints}
\mathfrak z_t^{(a,b)}(s) \stackrel{(d)}{=} \mathfrak z_t(s) + \left(1-\frac{s}{t} \right)a + \left(\frac{s}{t} \right)b, \qquad 0\leq s\leq t,
\eeq  
with equality holding in distribution. \\

We now recall \cite[Lemma 3.4]{abk_genealogies} which deals with probabilities that a Brownian bridge stays below linear functions; the proof is elementary and will not be given here.

\begin{lem} \label{bridge_cpam} Let $z_1, z_2 \geq 0$ and $r_1, r_2\geq 0$. Then for $t> r_1+r_2$, 
\beq \bea
& \PP\left[ \mathfrak z_t(s) \leq \left(1-\frac{s}{t} \right) z_1 +\frac{s}{t} z_2, \, r_1\leq s \leq t-r_2 \right] \\
& \hspace{4cm}\leq \frac{2}{t-r_1-r_2} \prod_{i=1,2} \left\{ z(r_i)+\sqrt{r_i} \right\},
\eea \eeq
where $z(r_1)\defi \left(1-\frac{r_1}{t} \right) z_1 +\frac{r_1}{t} z_2$ and
$z(r_2)\defi \frac{r_2}{t} z_1 + \left(1-\frac{r_2}{t}\right)z_2 $.
\end{lem}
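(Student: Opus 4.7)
The plan is to reduce the event to a positivity event for a Brownian bridge from $z_1$ to $z_2$, condition on the endpoint values at times $r_1$ and $t-r_2$, apply the classical reflection-principle formula for the survival probability, and then bound a second-moment quantity by Cauchy–Schwarz.

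First I would subtract the linear function. Set $\ell(s)\defi \left(1-\tfrac{s}{t}\right) z_1 + \tfrac{s}{t} z_2$ and define $W(s)\defi \ell(s)-\mathfrak z_t(s)$. Since $-\mathfrak z_t$ has the same law as $\mathfrak z_t$, the identity \eqref{bridge_endpoints} shows that $W$ is distributed as a Brownian bridge of length $t$ from $z_1$ to $z_2$, i.e.\ $W\stackrel{(d)}{=}\mathfrak z_t^{(z_1,z_2)}$. The event in the statement is precisely
\[
\bigl\{W(s)\geq 0,\ r_1\leq s\leq t-r_2\bigr\}.
\]

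Second, I would use the Markov property of the bridge at the two times $r_1$ and $t-r_2$. Conditionally on $W(r_1)=a$ and $W(t-r_2)=b$, the restriction of $W$ to $[r_1,t-r_2]$ is a Brownian bridge from $a$ to $b$ of length $t-r_1-r_2$, which of course can only stay nonnegative if $a,b\geq 0$. For such a bridge the classical reflection-principle identity gives
\[
\PP\bigl[\text{bridge stays}\geq 0\bigr] \;=\; 1-\exp\Bigl(-\tfrac{2ab}{\,t-r_1-r_2\,}\Bigr) \;\leq\; \frac{2ab}{t-r_1-r_2}.
\]
Integrating out the two endpoints yields
\[
\PP\bigl[W(s)\geq 0,\ r_1\leq s\leq t-r_2\bigr] \;\leq\; \frac{2}{t-r_1-r_2}\,\E\bigl[W(r_1)_+\,W(t-r_2)_+\bigr].
\]

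Third, I would bound the second-moment expression. By Cauchy–Schwarz,
\[
\E\bigl[W(r_1)_+\,W(t-r_2)_+\bigr] \;\leq\; \sqrt{\E[W(r_1)^2]\cdot \E[W(t-r_2)^2]}.
\]
From the representation $W(s)=\ell(s)+\mathfrak z_t(s)$ (in law) with $\mathfrak z_t(s)$ centered of variance $s(t-s)/t$, we have $\E[W(s)^2]=\ell(s)^2+s(t-s)/t$. Since $r_1(t-r_1)/t\leq r_1$ and $(t-r_2)r_2/t\leq r_2$, and noting that $\ell(r_1)=z(r_1)$ and $\ell(t-r_2)=z(r_2)$, we get
\[
\E[W(r_1)^2]\leq (z(r_1))^2+r_1 \leq \bigl(z(r_1)+\sqrt{r_1}\bigr)^2,
\]
and similarly for $W(t-r_2)$. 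Combining these estimates gives the desired bound.

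The whole argument is essentially bookkeeping; the only place requiring a touch of care is verifying that the $(1-e^{-2ab/T})$ formula for the unconstrained bridge applies even when $a$ or $b$ is at $0$ (which is harmless because the bound $1-e^{-x}\leq x$ is continuous and the event becomes degenerate in that case), and confirming that the variance bound $s(t-s)/t\leq s$ is what absorbs the $\sqrt{r_i}$ correction on the right-hand side. The cleanest way to present it is to state the reflection-principle fact as a one-line cited result and let the rest reduce to Cauchy–Schwarz.
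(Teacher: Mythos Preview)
The paper does not actually prove this lemma; it merely cites \cite[Lemma~3.4]{abk_genealogies} and remarks that ``the proof is elementary and will not be given here.'' So there is no in-paper argument to compare against.

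Your proof is correct and is exactly the kind of elementary argument the authors allude to. The three ingredients---shifting to a bridge from $z_1$ to $z_2$ via \eqref{bridge_endpoints}, conditioning on the endpoints $W(r_1)$, $W(t-r_2)$ and invoking the reflection-principle identity $\PP[\min\geq 0]=1-e^{-2ab/T}\leq 2ab/T$, and finally Cauchy--Schwarz together with the variance bound $s(t-s)/t\leq \min(s,t-s)$---are assembled cleanly. One cosmetic remark: in Step~4 you might make explicit that $W(r_1)_+W(t-r_2)_+$ already absorbs the indicator $\{a,b\geq 0\}$, so there is no gap between the conditional bound and the unconditional expectation; you do this implicitly but a reader might pause there.
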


\begin{proof}[Proof of Lemma \ref{basta}] We begin by first writing explicitly the underlying
conditions on the paths. For $f: \R_+ \to \R, t\mapsto f(t)$ a generic function, we denote  by $f^{S}(\cdot) \defi f(S+\cdot)$ its time-shift by $S>0$. We also shorten 
$y(s)\defi \sqrt{2}s-x(s)$, where $x(s)= y$ as in \eqref{huckleberry}, 
and $J_{T,s}\defi J_T-s$. We also set $\Omega_T \defi O(\ln \ln T)$.
By elementary manipulations one easily sees that 
\beq \bea
\PP\left[ x\in T_{(s, J_T)}^{(2)} \mid x(s) = y\right] = \PP\left[ \boldsymbol{(E)} \right],
\eea \eeq
where $\boldsymbol{(E)}$ is the event 
\beq \bea \label{huck_six} 
\boldsymbol{(E)} = \begin{cases} x(J_{T,s})\geq m(J_{T,s})+y(s)+\frac{3}{2\sqrt{2}}\ln\left(\frac{J_{T,s}}{J_T}\right)+ D+y_k(R_T) + \Omega_T & \hfill \boldsymbol{(E_1)}\\
F_1(t) \leq x(t) \leq F_2(t), \qquad 0 \leq t \leq J_{T,s}-r_T &  \hfill \boldsymbol{(E_2)}\\
\end{cases}
\eea\eeq
where $F_1, F_2$ are the entropic (resp. lower) envelopes of \eqref{cond_path_after_R_T} shifted by $s$:
\beq \bea \label{defi_F}
F_1(t) & \defi y_k(R_T)+y(s)+ \frac{t}{J_T} m(J_T)+\frac{3}{2\sqrt{2}}\frac{s}{J_T} \ln(J_T)
-f_{\alpha,J}^{R_T+s}(t)+\Omega_T, \\
F_2(t)  & \defi y_k(R_T)+y(s)+ \frac{t}{J_T} m(J_T)+\frac{3}{2\sqrt{2}}\frac{s}{J_T} \ln(J_T) 
-f_{\beta,J}^{R_T+s}(t)+\Omega_T,
\eea \eeq
with $\Omega_T=O(\ln \ln T)$. By the very same localizations, we also have a condition on $x(s)$. This reads
\beq \bea \label{condition_no_use}
x(s) &
\in \Big(-f_{\beta,J}^{R_T}(s); -f_{\alpha,J}^{R_T}(s)\Big)+ y_k(R_T)+\sqrt{2}s -\frac{3}{2\sqrt{2}}\frac{s}{J_T}\ln(J_T).
\eea\eeq
For later use, we reformulate \eqref{condition_no_use} into a condition on $y_k(R_T)+y(s)$,
namely:
\beq \label{cond_y_y}
y_k(R_T)+y(s) \in \Big(f_{\alpha,J}^{R_T}(s); f_{\beta,J}^{R_T}(s)\Big)+\frac{3}{2\sqrt{2}}\frac{s}{J_T}\ln(J_T).
\eeq

We now construct an event $\boldsymbol{(E')} \supsetneq \boldsymbol{(E)}$. First, we drop the condition that the Brownian path 
is required to stay {\it above} $F_2$. Second,  
we replace the condition on $F_1$ by the condition that the $x$-path remains, on the interval
$(0, J_{T,s}-r_T)$, {\it below} the line segment interpolating between $(0,F_1(0))$ 
and $(J_{T,s}, F_1(J_{T,s}))$, see Figure \ref{interpolation_fig} for a graphical representation. Precisely, 
we consider 
\beq \bea \label{huck_seven} 
\boldsymbol{(E')} = \begin{cases} x(J_{T,s})\geq m(J_{T,s})+y(s)+\frac{3}{2\sqrt{2}}\ln\left(\frac{J_{T,s}}{J_T}\right)+ D+y_k(R_T) + \Omega_T & \hfill \boldsymbol{(E'_1)}\\
x(t) \leq  \left(1-\frac{t}{J_{T,s}} \right) F_1(0) +  \frac{t}{J_{T,s}} F_1(J_{T,s})  \qquad 0 \leq t \leq J_{T,s}-r_T &  \hfill \boldsymbol{(E'_2)}\\
\end{cases}
\eea\eeq
By construction, 
\beq 
\PP\left[ \boldsymbol{(E)} \right] \leq \PP\left[ \boldsymbol{(E')}\right].
\eeq

\begin{figure}
\includegraphics[scale=0.4]{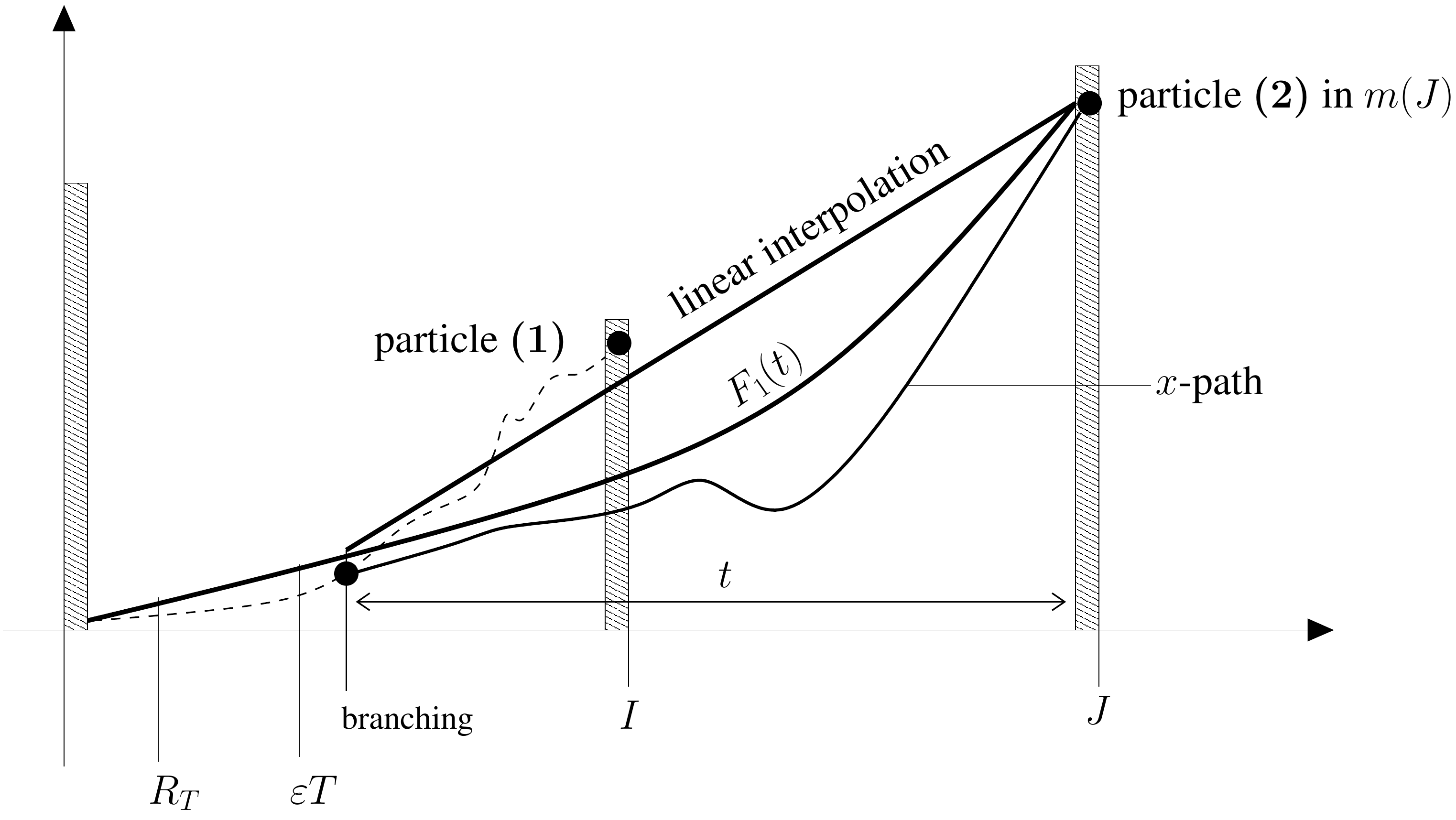}
\caption{The $x$-path stays below the linear interpolation}
\label{interpolation_fig}
\end{figure}

Let us put 
\beq \bea \label{defi_drift}
X(s, J_T) &\defi m(J_{T,s})+y(s)+\frac{3}{2\sqrt{2}}\ln\left(\frac{J_{T,s}}{J_T}\right)+ D+y_k(R_T) + \Omega_T \\
& = \sqrt{2} J_{T,s}-\frac{3}{2\sqrt{2}} \ln J_{T,s}+\left\{\frac{3}{2\sqrt{2}}\ln\left(\frac{J_{T,s}}{J_T}\right)+y(s)+y_k(R_T)  +\Omega_T\right\}.
\eea \eeq
We write
\beq \bea \label{p_e}
\PP\left[ \boldsymbol{(E')}\right] & = \int_{0}^{\infty} \PP\left[ \boldsymbol{(E'_2)} \big| x(J_{T,s}) = X(s, J_T)+X\right] \tilde \mu(dX),
\eea \eeq
where $\tilde \mu$ is a Gaussian with variance $J_{T,s}$ and mean $-X(s, J_T)$, i.e.
\beq 
\tilde \mu(dX) = \exp\left(-\frac{(X+X(s, J_T))^2}{2J_{T,s}} \right) \frac{dX}{\sqrt{2\pi J_{T,s}}}\,.
\eeq
We now make some observations concerning the Gaussian density and the conditional probability appearing in \eqref{p_e}. \\

For the Gaussian density, we recall that $J_{T,s} = J_T-s$ for $0\leq s \leq I_T-r_T\leq I_T$. Moreover, since $J_T-I_T > T^\xi$ and  $J_T \geq \vare T$, we see that 
\beq \label{uno}
- (1-\xi) \ln T - \ln \vare \leq \ln\left(\frac{J_{T,s}}{J_T}\right)  \leq 0.
\eeq
And, 
\beq \label{due}
y(s)+y_k(R_T)=o(J_{T,s}) \quad (T \uparrow \infty),
\eeq
by \eqref{cond_y_y}. 
Therefore, combining \eqref{uno} and \eqref{due} we have that $X(s, J_T) = \sqrt{2}J_{T,s}+ o(J_{T,s})$ as $T\uparrow \infty$.
The Gaussian density can thus be developed as follows
\beq \label{Gaussian_bound}
\tilde \mu(dX) =  J_{T,s}e^{-J_{T,s}} e^{-\sqrt{2} \Delta_T(s)} g_{T}(X) dX,
\eeq
where 
\beq 
\Delta_T(s) \defi y(s)+ \frac{3}{2\sqrt{2}}\ln\left(\frac{J_{T,s}}{J_T}\right)     +y_k(R_T),
\eeq
and
\beq
g_{T}(X) \defi \frac{e^{-X^2/2J_{T,s}}}{\sqrt{2\pi}}  e^{-\sqrt{2}(1+\omega_T)X}\left(1+\Omega_T \right),
\eeq
$\omega_T = o(1)$ as $T\uparrow \infty$, and $\Omega_T = O(\ln \ln T)$. \\

For the conditional probability appearing in \eqref{p_e}, we observe that conditioning on the event $\{x(J_{T,s}) = X\}$, turns the Brownian motion involved in the 
definition of $\boldsymbol{E'_2}$ into a Brownian bridge ending at the conditioning point. Precisely,  
\beq \bea
& \PP\left[ \boldsymbol{(E'_2)} \mid x(J_{T,s}) = X(s, J_T)+X \right] =\PP\left[ \boldsymbol{(E'')} \right],
\eea \eeq
where
\beq \bea
\boldsymbol{(E'')} & \defi \Big\{\forall_{0 \leq t \leq J_{T,s}-r_T}: \; \mathfrak z_{J_{T,s}}(t) \leq  \left(1-\frac{t}{J_{T,s}} \right) F_1(0) +  \frac{t}{J_{T,s}}\left( F_1(J_{T,s})-X(s, J_T)-X\right) \Big\}\\
& \quad = \Big\{\forall_{0 \leq t \leq J_{T,s}-r_T}: \; \mathfrak z_{J_{T,s}}(t) \leq  \left(1-\frac{t}{J_{T,s}} \right) F_1(0) +  \frac{t}{J_{T,s}}\left( \Omega_T-X\right) \Big\},
\eea \eeq
since by \eqref{defi_F} one has $F_1(J_{T,s})=\Omega_T= O(\ln \ln T)$. We
easily compute an upper bound to the probability of the $\boldsymbol{(E'')}$-event. By Lemma \ref{bridge_cpam}, putting there $z_1 \defi  F_1(0)$ and $z_2\defi \max\{\Omega_T-X; 0\}$), it holds:
\beq \bea
\PP\left[\boldsymbol{(E'')}\right] & \leq \frac{2}{J_{T,s}-r_T} F_1(0) \left(\frac{r_T}{J_{T,s}} F_1(0)
+ \left(1-\frac{r_T}{J_{T,s}} \right) \max\{\Omega_T-X; 0\} +\sqrt{r_T}\right).
\eea \eeq
Since $F_1(0)= y_k(R_T)+y(s)-f_{\beta, J}(R_T+s)\leq \Omega_T$ by the localization \eqref{due}, and 
$r_T \ll J_{T,s}= O(T)$, as $T\uparrow \infty$,
\beq \label{p_e_due}
\PP\left[\boldsymbol{(E'')}\right]\leq \frac{2\max\{\Omega_T-X; 0\}+\sqrt{r_T}}{J_{T,s}}.
\eeq
If we now plug the bounds \eqref{p_e_due} and \eqref{Gaussian_bound} into \eqref{p_e}, perform
the integral over $dX$, we immediately get that 
\beq \label{almost_done}
\PP\left[ \boldsymbol{(E')}\right] \leq \Omega_T^2 e^{-J_{T,s}-\sqrt{2}\Delta_T(s)},
\eeq
for some $\Omega_T  = O\left((\ln T)^{\epsilon^{(7)}}\right)$. By \eqref{cond_y_y} we may now bound $\Delta_T(s)$ from below, {\it uniformly} in $y(s)$: the upshot is 
\beq
\PP\left[ \boldsymbol{(E')}\right] \leq \Omega_T^2 e^{-J_{T,s}} \exp\left( -\sqrt{2}f_{\alpha, J}(R_T+s)-
\frac{3}{2}\ln\left( \frac{J_{T,s}}{J_T}\right)-\frac{3}{2} \frac{s}{J_T}\ln J_T \right).
\eeq
This is the uniform bound we were looking for and concludes the proof of Lemma \ref{basta}. 
\end{proof}

We finally give the 
\begin{proof}[Proof of Proposition \ref{last_unif}] Using the uniform bound provided by Lemma \ref{basta} in \eqref{huckleberry} and integrating over $\mu_s(dy)$ we  obtain
\beq \bea \label{huckleberry_two}
& \wp(I_T, J_T; y_k(R_T);  \text{split before}\, I_T-r_T) \leq \kappa \cdot \Omega_T \cdot e^{I_T} \cdot\PP\left[ x \in T^{(1)}_{(0,I_T)}\right] \times\\
& \qquad \times \int_0^{I_T-r_T} ds \cdot \exp\left( -\sqrt{2}f_{\alpha, J}(R_T+s)-\frac{3}{2}\ln\left( \frac{J_T-s}{J_T}\right)-\frac{3}{2} \frac{s}{J_T}\ln J_T \right).
\eea \eeq

The term $e^{I_T}\PP\left[ x \in T^{(1)}_{(0,I_T)}\right]$ can be handled by considerations 
similar to those in the proof of Lemma \ref{basta}. 
The condition $T^{(1)}_{(0,I_T)}$ gives rise to the event 
\beq \bea \label{huck_old} 
\begin{cases} x(I_T)\geq m(I_T)+D+y_k(R_T), & \\
F_2(t) \leq x(t) \leq F_1(t), & 0 \leq t \leq I_T-r_T.
\end{cases}
\eea\eeq
where
\beq \bea
F_1(t) & \defi y_k(R_T) + \frac{t}{I_T} m(I_T) - f_{\alpha,I}^{R_T}(t) + \Omega_T \\
F_2(t) & \defi y_k(R_T) +  \frac{t}{I_T} m(I_T) - f_{\beta,I}^{R_T}(t) + \Omega_T.
\eea\eeq
(For some $\Omega_T=O(\ln \ln T)$). 
In particular, the probability of the event is bounded by the probability that a Brownian motion
stays below the linear interpolation of the points $(0,F_1(0))$ and $(I_T-r_T, F_1(I_T-r_T))$ during the interval
of time $(0,I_T-r_T)$ intersected with the event $x(I_t-r_T)\geq F_2(I_T - r_T)$, that is:
\beq 
\bea
\PP\Bigg[
x(t)\leq \frac{t}{I_T-r_T} F_1(I_T-r_T) + \left(1 - \frac{t}{I_T-r_T} \right)  F_1(0), ~ \forall 0\leq t \leq I_T-r_T, \\ x(I_t-r_T)\geq F_2(I_T - r_T)
\Bigg]
\eea
\eeq
Subtracting $\frac{t}{I_T-r_T} x(I_T-r_T)$ and using the fact that  $x(I_t-r_T)\geq F_2(I_T - r_T)$, the above can be bounded above 
by $\PP\Big[x(I_t-r_T)\geq F_2(I_T - r_T) \Big]$ times the Brownian bridge probability:
\beq 
\bea
& \PP\Bigg[
\mathfrak z_{I_T-r_T}(t)\leq \frac{t}{I_T-r_T} (F_1(I_T-r_T)-F_2(I_T-r_T)) \\
& \hspace{6cm} + \left(1 - \frac{t}{I_T-r_T} \right)  F_1(0), ~ \forall 0\leq t \leq I_T-r_T\Bigg]\ .
\eea
\eeq
Now $F_1(I_T-r_T)-F_2(I_T-r_T)\leq \kappa R_T^\beta$, for some $\kappa>0$. Therefore the probability of the Brownian bridge can be bounded using Lemma \ref{bridge_cpam} by
\beq\label{crap}
\frac{2\kappa}{I_T-r_t} R_T^\beta~ F_1(0)=\frac{2\kappa}{I_T-r_t} R_T^\beta ~(y_k(R_T)+D - R_T^\alpha +\Omega_T)\ .
\eeq
Now, note that $m(I_T)-m(I_T-r_T)=\sqrt{2} ~ r_T+ o(1)$. Therefore, for some $\kappa>0$,
\beq
\bea
F_2(I_T - r_T) - m(I_T-r_T)\geq  y_k(R_T)+  \kappa ~ r_T\ .
\eea
\eeq
A standard Gaussian estimate thus yields for some $\epsilon >0$,
\beq\bea 
& \PP\Big[x(I_t-r_T) - m(I_T-r_T) \geq F_2(I_T - r_T) - m(I_T-r_T) \Big] \\
& \hspace{7cm} \leq \kappa (I_T-r_T) e^{ - \sqrt{2}y_k(R_T)} e^{ -(\ln T)^{\epsilon}}.
\eea\eeq 
A combination of the above equation and \eqref{crap} gives a bound of the desired form \eqref{really_almost}.

It thus remains to provide a simlar bounds for the integral in \eqref{huckleberry_two}. We first write 
\beq
\int_0^{I_T-r_T} = \int_0^{I_T/2}  + \int_{I_T/2}^{I_T-r_T}
\eeq
For the first integral, since $s\leq I_T/2$, we have 
\beq \bea 
\ln\left(\frac{J_{T}-s}{J_T} \right) & = \ln\left(1-\frac{s}{J_T} \right) \geq \ln\left(\frac{1}{2} \right)
\eea\eeq
hence, up to irrelevant numerical constant, the contribution of the first integral is at most 
\beq \label{first_contr}\bea 
& \Omega_T^2 \int_0^{I_T/2} ds  e^{-\sqrt{2} (R_T+s)^{\alpha}} \leq \Omega_T^2 \int_{R_T}^{\infty} ds e^{-\sqrt{2}s^{\alpha}} \leq e^{-\sqrt{2}R_T^{\epsilon^{(5)}}}
\eea \eeq
for some $\epsilon^{(5)}>0$ small enough. 
The contribution of the second integral is sub-exponentially small (in $T$). To see this, recall that $J_T-I_T>T^\xi$ and $s\in[I_T/2,I_T-r_T]$, thus for some $\kappa_1<0<\kappa_2$,
\beq \bea
\kappa_1 \ln T \leq \ln\left(\frac{J_T-s}{J_T} \right) & \leq \kappa_2 \ln T 
\eea\eeq
implying that the second integral is, for some $\kappa >0$, at most 
\beq \label{second_contr} \bea
& T^{\kappa}\int_{I_T/2}^{I_T-r_T}  e^{-\sqrt{2}f_{\alpha,J}(R_T+s)} ds
& \leq T^{\kappa} e^{- T^{\epsilon^{(6)}}} \leq e^{-T^{\epsilon^{(7)}}}
\eea \eeq
for some $\epsilon^{(6)}, \epsilon^{(7)}>0$. This is obviously much smaller than the first contribution \eqref{first_contr}.
Therefore, summing thus up,
\beq \bea
& \wp(I_T, J_T; y_k(R_T); \text{split before}\, I_T-r_T) \\
& \qquad \leq (\ln T)^{\epsilon^{(6)}} e^{-(\ln T)^{\epsilon^{(5)}}} y_k(R_T)e^{-\sqrt{2} y_k(R_T)}.
\eea \eeq
This concludes the proof of Proposition \ref{last_unif} by putting 
$\epsilon^{(3)} \defi \epsilon^{(6)}$ and $\epsilon^{(4)}\defi \epsilon^{(5)}$.

\end{proof}

{\bf Acknowledgments.} NK gratefully acknowledges discussions with B. Derrida and E. Brunet.

\end{document}